\documentclass[oneside,final,onefignum,onetabnum]{siamart220329}
  
\usepackage{amsfonts, amssymb, bm, amsmath, cases}
\usepackage{amsmath}
\usepackage{amssymb}
\usepackage{amsfonts}
\usepackage{mathrsfs} 
\usepackage{color}

\usepackage{graphicx}
\usepackage{color}
\usepackage{bm}

\usepackage[short,nocomma]{optidef}
\usepackage{booktabs}
\usepackage{cancel}

\usepackage{amsfonts}
\usepackage{booktabs}
\usepackage{siunitx}
\usepackage{icomma}

\usepackage{lscape}
\usepackage[paper=portrait,pagesize]{typearea}
\usepackage{multirow}
\usepackage{cancel}

\usepackage{cleveref}

\def\Z{\mathbb{Z}}\def\Q{\mathbb{Q}}
\def\R{\mathbb{R}}
\def\C{\mathbb{C}}

\def\pset{{{P}}}

\def\detv{{\mathscr{D}}}
\def\inc{{\mathscr{V}}}
\def\incproj{{\mathscr{W}}}
\def\calF{{\mathcal{F}}}

\def\calA{{\mathcal{A}}}

\def\size{n}
\def\cons{m}
\def\rnk{r}
\def\affmap{\mathscr{A}}
\def\face{\mathcal{F}}
\def\sym{\mathbb{S}}

\newtheorem{assumption}{Assumption}



\newcommand\hybrid{{\tt{Hybrid}}}
\newcommand\msolve{{\tt{msolve}}}

\newcommand\HNS{{\tt{HNS}}} 

\newcommand\Macaulay{{\tt{Macaulay2}}}

\newcommand\la{\left\langle}
\newcommand\ra{\right\rangle}
\newcommand\rank{\text{rank}}

\newcommand\pmtx[1]{\begin{pmatrix}#1\end{pmatrix}} 
 
\newcommand\smtx[1]{\left(\begin{smallmatrix}#1\end{smallmatrix}\right)}


\newcommand\mymid{\,\, : \,\,}

\newcommand{\rev}[1]{{#1}}

\newcommand\subsetsim{\mathrel{%
  \ooalign{\raise0.2ex\hbox{$\subset$}\cr\hidewidth\raise-0.8ex\hbox{\scalebox{0.9}{$\sim$}}\hidewidth\cr}}}

\def\myparagraph#1{\vspace{2pt}{\bf #1.}}

\usepackage[a4paper,
            bindingoffset=0in,
            left=1in,
            right=1.55in,
            top=1in,
            bottom=1in,
            footskip=.25in]{geometry}

\addtolength{\oddsidemargin}{0.25in}

\usepackage{braket,amsfonts}

\usepackage{array}

\usepackage[caption=false]{subfig}


\usepackage{pgfplots}

\newsiamthm{claim}{Claim}
\newsiamremark{remark}{Remark}
\newsiamremark{hypothesis}{Hypothesis}
\crefname{hypothesis}{Hypothesis}{Hypotheses}

\usepackage{algorithmic}

\usepackage{graphicx,epstopdf}

\Crefname{ALC@unique}{Line}{Lines}

\usepackage{amsopn}

\usepackage{xspace}
\usepackage{bold-extra}
\usepackage[most]{tcolorbox}

\colorlet{texcscolor}{blue!50!black}
\colorlet{texemcolor}{red!70!black}
\colorlet{texpreamble}{red!70!black}
\colorlet{codebackground}{black!25!white!25}


\lstdefinestyle{siamlatex}{%
  style=tcblatex,
  texcsstyle=*\color{texcscolor},
  texcsstyle=[2]\color{texemcolor},
  keywordstyle=[2]\color{texemcolor},
  moretexcs={cref,Cref,maketitle,mathcal,text,headers,email,url},
}

\tcbset{%
  colframe=black!75!white!75,
  coltitle=white,
  colback=codebackground, 
  colbacklower=white, 
  fonttitle=\bfseries,
  arc=0pt,outer arc=0pt,
  top=1pt,bottom=1pt,left=1mm,right=1mm,middle=1mm,boxsep=1mm,
  leftrule=0.3mm,rightrule=0.3mm,toprule=0.3mm,bottomrule=0.3mm,
  listing options={style=siamlatex}
}

\newtcblisting[use counter=example]{example}[2][]{%
  title={Example~\thetcbcounter: #2},#1}

\newtcbinputlisting[use counter=example]{\examplefile}[3][]{%
  title={Example~\thetcbcounter: #2},listing file={#3},#1}

\DeclareTotalTCBox{\code}{ v O{} }
{ 
  fontupper=\ttfamily\color{black},
  nobeforeafter,
  tcbox raise base,
  colback=codebackground,colframe=white,
  top=0pt,bottom=0pt,left=0mm,right=0mm,
  leftrule=0pt,rightrule=0pt,toprule=0mm,bottomrule=0mm,
  boxsep=0.5mm,
  #2}{#1}

\patchcmd\newpage{\vfil}{}{}{}
\flushbottom



\usepackage{hyperref}
\hypersetup{
    colorlinks=true,
    citecolor=blue,
    linkcolor=blue,
    filecolor=magenta,      
    urlcolor=blue,
    pdftitle={papersioptknz}
    }

\begin{tcbverbatimwrite}{tmp_\jobname_header.tex}
  
  \title{Certifying solutions of degenerate semidefinite programs\thanks{Accepted to SIAM J. Optimization (April 2025)\funding{Research was supported by the Doctoral Programme Vienna Graduate School on Computational Optimization (VGSCO) which was funded by FWF (Austrian Science Fund), project W 1260-N35. This work has been partially supported by the ANR JCJC project ANR-21-CE48-0006-01 “HYPERSPACE”.}}}
  
  \author{Vladimir Kolmogorov\thanks{Institute of Science and Technology Austria (ISTA) (\email{vnk@ist.ac.at}).}
    \and Simone Naldi\thanks{Université de Limoges, CNRS, XLIM, UMR 7252, F-87000 Limoges, France\\ \hbox{}\hspace{0.45cm} Sorbonne Université, CNRS, LIP6, F-75005 Paris, France. (\email{simone.naldi@unilim.fr}).}
    \and Jeferson Zapata\thanks{Institute of Science and Technology Austria (ISTA) (\email{jeferson.zapata@ist.ac.at}).}}
  
  \headers{Verifying feasibility of degenerate semidefinite programs}{V. Kolmogorov, S. Naldi, and J. Zapata}
\end{tcbverbatimwrite}

\input{tmp_\jobname_header.tex}


\pgfplotsset{compat=1.18}
\begin{document}

\maketitle

\begin{tcbverbatimwrite}{tmp_\jobname_abstract.tex}
  \begin{abstract}
   This paper deals with the algorithmic aspects of solving feasibility problems of semidefinite programming (SDP), aka linear matrix  inequalities (LMI). Since in some SDP instances all feasible solutions have irrational entries, numerical solvers that work with rational numbers can only find an approximate solution. We study the following question: is it possible to certify feasibility of a given SDP using an approximate solution that is sufficiently close to some exact solution? Existing approaches make the assumption that there exist rational feasible solutions (and use techniques such as rounding and lattice reduction algorithms). We propose an alternative approach that does not need this assumption. More specifically, we show how to construct a system of polynomial equations whose set of real solutions is guaranteed to have an isolated correct solution (assuming that the target exact solution is maximum-rank). This allows, in particular, to use algorithms from real algebraic geometry for solving systems of polynomial equations, yielding a hybrid (or symbolic-numerical) method for SDPs. We experimentally compare it with a pure symbolic method in~\cite{HNS2015c}; the hybrid method was able to certify feasibility of many SDP instances on which~\cite{HNS2015c} failed. \rev{Our approach may have further applications,} such as refining an approximate solution using methods of numerical algebraic geometry for systems of polynomial equations.
  \end{abstract}

  \begin{keywords}
    Symbolic-numeric algorithms; linear matrix inequalities; weak feasibility; facial reduction; spectrahedra.
  \end{keywords}
  
\begin{MSCcodes}
90C22, 68W30
\end{MSCcodes}
\end{tcbverbatimwrite}
\input{tmp_\jobname_abstract.tex}


\section{Introduction}
Semidefinite programming (SDP) is a nontrivial generalization of linear programming (LP) asking to minimize linear functions defined over the space of real symmetric matrices and restricted to the PSD cone (the convex cone of positive semidefinite matrices). Despite its simple definition the complexity analysis of SDP and of its feasibility problem (LMI, linear matrix inequalities) is still a fairly open question, and likewise the development of efficient but reliable algorithms, which is at the heart of research in convex optimization.

Beyond its interest as conic optimization problem, the importance of SDP is enhanced by its numerous applications, the most classical of which is perhaps the semidefinite relaxation of MAX-CUT by Goemans and Williamson \cite{goemans}. More generally,  SDP is used as numerical tool for solving hard non-convex optimization problems through the so-called moment-SOS hierarchy \cite{henrion2020moment}, consisting in relaxing polynomial optimization to a sequence of SDP problems of increasing size and with good convergence properties \cite{lasserre01globaloptimization}. Specific applications include checking stability of systems in control theory~\cite{Papachristodoulou:05}
and analyzing the convergence rate of numerical algorithms for convex optimization~\cite{Drori:14}. In both cases one needs to find a certain Lyapunov function,
and the search for such function is cast as a small-scale SDP.

Unfortunately, naively relying on numerical SDP solvers may lead to incorrect conclusions; examples for small dynamical systems can be found in~\cite{roux2018validating}.
This motivates the need for algorithms that can certify feasibility of a given SDP. Below we discuss two classes of such algorithms:
 {\em symbolic} methods that compute an exact solution in a {\em Rational Univariate Representation} using techniques from real algebraic geometry,
and {\em symbolic-numerical} (or {\em hybrid}) methods that certify feasibility using a numerical SDP solver as a subroutine.

\myparagraph{Symbolic methods}
Because the constraints in SDP are nonlinear in the entries of the matrix, but still algebraic (defined by polynomial inequalities), the solution of a SDP program is in general not rational but defined over some algebraic extension of the base field, which we usually assume to be $\mathbb{Q}$. The degree of this extension, known as the algebraic degree of SDP, represents a measure of the inner complexity of the program, for which exact formulas are known \cite{stu}.

The underlying algebraic structure of SDP has motivated in the last years the development of computer algebra algorithms whose arithmetic complexity is essentially quadratic in some multilinear bound on the mentioned algebraic degree \cite{HNS2015c,naldi:tel-01212502}. The strategy of the series of work \cite{HNS2014,HNS2015c,HNS2015b,naldiIssac2016integral} relies on reducing the problem of finding a matrix in a section of the PSD cone to the one of finding low rank elements in some affine space of matrices. This can be cast as a real-root finding problem, asking to compute at least one point in every real connected component of an algebraic set, modeled using systems of polynomial equations with finitely-many solutions and hence solved using Gr\"obner-bases-based algorithms \cite{berthomieu2021msolve}.  By its purely symbolic nature, these algorithms are not comparable, in terms of target problem size, with numerical methods for SDP that are mainly based on variants of interior-point method (IPM), and their correctness depend on genericity assumptions on input data. 

Concerning algebraic approaches to SDP, let us mention the method in \cite{naldi2020conic} based on homogenization and projective geometry for identifying the feasibility type of a LMI and the certificate in \cite{klep2013exact} based on the theory of sums of squares polynomial for checking SDP infeasibility.

\myparagraph{Hybrid methods} These methods can potentially handle much larger SDP instances. Most of the existing work has focused on the case when the set of feasible solutions
contains rational points~\cite{harrison2007verifying,KLYZ12,monniaux2011generation,PaPe,Platzer:09,roux2018validating,dostert2020exact}. Some of these papers~\cite{monniaux2011generation,dostert2020exact} employ a lattice reduction technique, namely the Lenstra-Lenstra-Lov\'asz (LLL) algorithm. The paper~\cite{dostert2020exact}
discusses an extension to quadratic fields $\mathbb Q[\sqrt\ell]$, but the method seems to assume that $\ell$ is known.

Let us finally mention the method by numerical algebraic geometry in \cite{BertiniSDP} which does not strictly fall into the two classes above. It is based on SDP duality and applies homotopy continuation for numerically tracking the central path.

\myparagraph{Our contributions} In this paper we develop an alternative hybrid method which is not restricted to the case when the feasible region contains rational points.
The method essentially reduces the problem of certifying feasibility of an SDP to that of solving a system of polynomial equations 
whose set of real solutions has a zero-dimensional component with a desired solution.
Our main result can be stated as follows (a more refined formulation is given later in Theorems~\ref{th:method} and~\ref{th:numerical}).

\begin{theorem} There exists an algorithm that, given a feasible system 
$\affmap(X) = b, \, X \succeq 0$,  approximate solution $\tilde X$ \rev{and parameter $\epsilon>0$}, constructs a system of polynomial equations over symmetric matrices $X\in\mathbb R^{n\times n}$ 
and auxiliary variables $Y\in\mathbb R^{\size \times (\size-\rnk)}$
of the following form:
\begin{subequations}\label{eq:GALKDJHGALKH}
\begin{align}
\affmap(X) &\;\;=\;\; b \;,\;\;\; X^\top \;\;=\;\; X \label{eq:GALKDJHGALKH:a}
\\
\Pi^\top X \Pi \left(\begin{matrix} Y \\ I_{\size-\rnk}\end{matrix}\right)  &\;\;=\;\; 0^{\size \times (\size-\rnk)}  \label{eq:GALKDJHGALKH:b}
\\
X_{ij}&\;\;=\;\; \tilde X_{ij} \qquad \forall (i,j)\in K \label{eq:GALKDJHGALKH:c}
\end{align}
\end{subequations}
where $\Pi$ is a permutation matrix, $r\in[n]$, and $K$ is a subset of $\{(i,j)\in [n]\times[n]\::\:i\le j\}$.
(Quantities $\Pi$, $r$ and $K$ are computed from $\affmap$, $b$, $\tilde X$ \rev{and $\epsilon$}).
Furthermore,  the set of real solutions of system~\eqref{eq:GALKDJHGALKH} has a zero-dimensional component $\{(X,Y)\}$ with $X\in P := \{X \succeq 0\::\:\affmap(X) = b\}$
\rev{assuming that $P$ contains a maximum-rank solution $X^\ast$ such that \\
{\rm (i)}  parameter $\epsilon$ is sufficiently small:  $\epsilon\le\epsilon_{\max}$
where $\epsilon_{\max}>0$ depends on $\calA$, $b$, $X^\ast$; \\
{\rm (ii)} $\tilde X$ is sufficiently close to $X^\ast$:
 $||\tilde X-X^\ast||\le \delta$ where $\delta>0$ depends on $\calA$, $b$, $X^\ast$, $\epsilon$.}
\end{theorem}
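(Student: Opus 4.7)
The plan is to design an algorithm that reads off the triple $(r,\Pi,K)$ from $(\calA,b,\tilde X,\epsilon)$ and then verifies that the corresponding system~\eqref{eq:GALKDJHGALKH} has a real solution $(X,Y)$ with $X\in P$ that is isolated in the real zero set, obtained by a small implicit-function-theorem perturbation of a Schur-complement lift $(X^\ast, Y^\ast)$ of the true maximum-rank solution $X^\ast$. The four main steps will be: (1) recover $r=\rank(X^\ast)$; (2) choose $\Pi$ to expose a nonsingular $r\times r$ principal block; (3) select $K$ so as to pin the remaining degrees of freedom on the minimal face of $P$ at $X^\ast$; (4) conclude isolation and feasibility via the implicit function theorem and continuity of eigenvalues.

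First I would compute the spectrum of $\tilde X$ and take $r$ to be the number of eigenvalues exceeding a threshold depending on $\epsilon$. If $\epsilon_{\max}$ is chosen smaller than half of the smallest positive eigenvalue $\lambda_r(X^\ast)$ and $\delta$ is small compared to $\epsilon$, Weyl's inequality forces this count to equal $\rank(X^\ast)$. In parallel, I would run a pivoted $LDL^\top$ on $\tilde X$ to locate $r$ indices with a well-conditioned principal submatrix, and let $\Pi$ send them to the leading positions; continuity of the determinant then guarantees that the $r\times r$ leading block of $\Pi^\top X^\ast\Pi$ is also nonsingular. Writing
\[
\Pi^\top X^\ast\Pi \;=\; \smtx{A_{11} & A_{12}\\ A_{12}^\top & A_{22}}, \qquad A_{11}\in\R^{r\times r}\text{ invertible},
\]
the rank-$r$ PSD property gives that the null space is exactly the column span of $\smtx{-A_{11}^{-1}A_{12}\\ I_{\size-\rnk}}$, so the choice $Y^\ast:=-A_{11}^{-1}A_{12}$ (embedded into the shape declared in the theorem) makes $(X^\ast,Y^\ast)$ satisfy~\eqref{eq:GALKDJHGALKH:a}--\eqref{eq:GALKDJHGALKH:b} exactly.

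Next I would choose $K$ so as to isolate $(X^\ast,Y^\ast)$. The linearization of~\eqref{eq:GALKDJHGALKH:a}--\eqref{eq:GALKDJHGALKH:b} at this point has kernel equal to the tangent space of the minimal face $\face\subseteq P$ containing $X^\ast$; the maximum-rank hypothesis is precisely what ensures $X^\ast$ lies in the relative interior of $\face$, so that this tangent description is clean. Letting $d=\dim\face$, I would pick $K$ as a set of $d$ pairs $(i,j)$ with $i\le j$ such that the coordinate projection $T_{X^\ast}\face\to\R^{|K|}$ is a bijection; such $K$ exists by elementary linear algebra, and for $\delta,\epsilon$ small enough it can be identified from $\tilde X$ alone via an approximate basis of the face. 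With this $K$, the Jacobian of~\eqref{eq:GALKDJHGALKH:a}--\eqref{eq:GALKDJHGALKH:c} at $(X^\ast,Y^\ast)$—treating~\eqref{eq:GALKDJHGALKH:c} momentarily as $X_{ij}=X^\ast_{ij}$—has full rank. The analytic implicit function theorem, applied to the parametric perturbation replacing $X^\ast_{ij}$ by $\tilde X_{ij}$ for $(i,j)\in K$, then produces a unique real solution $(X,Y)$ of the actual system~\eqref{eq:GALKDJHGALKH} in a neighborhood of $(X^\ast,Y^\ast)$. Symmetry and rank $r$ of $X$ are forced by~\eqref{eq:GALKDJHGALKH:a}--\eqref{eq:GALKDJHGALKH:b}; positive semidefiniteness follows from continuity of eigenvalues, since the $r$ nonzero eigenvalues of $X^\ast$ are bounded away from zero and the remaining $n-r$ are held at zero by the rank equation; feasibility $X\in P$ is then the content of~\eqref{eq:GALKDJHGALKH:a}, and isolation in the real zero set is immediate from the full-rank Jacobian.

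The hard part will be Step~3: identifying, from $(\tilde X,\epsilon)$ alone and without access to $X^\ast$, a coordinate set $K$ that works for the true tangent space $T_{X^\ast}\face$, together with matching quantitative $\epsilon$-versus-$\delta$ bounds. This amounts to a \emph{numerical facial reduction}, estimating $\face$ from the approximate null space of $\tilde X$ and from the image of $\calA^\top$, plus a perturbation analysis linking the numerical face to the true one. The precise forms of $\epsilon_{\max}$ and $\delta$ should drop out of this analysis, and I would expect the full quantitative statements to be the content of the refined Theorems~\ref{th:method} and~\ref{th:numerical}.
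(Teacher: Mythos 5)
Your Steps 1--2 and the choice $Y^\ast=-A_{11}^{-1}A_{12}$ match the paper (this is Lemma~\ref{lemma:SymSubmatrix}, and the rank-revealing computations with tolerance $\epsilon$ are the content of Theorem~\ref{th:numerical}); pinning down $\dim(\face\cap\{\calA(X)=b\})$ many coordinates of $X$ is also the right idea for $K$. The gap is in your Step~3/4: the Jacobian of \eqref{eq:GALKDJHGALKH:a}--\eqref{eq:GALKDJHGALKH:c} at $(X^\ast,Y^\ast)$ does \emph{not} have full column rank in general, so the implicit function theorem does not apply. The linearization of \eqref{eq:GALKDJHGALKH:b} at $(X^\ast,Y^\ast)$ is $\dot X\smtx{Y^\ast\\ I}+X^\ast\smtx{\dot Y\\ 0}=0$, and since $X^\ast\smtx{\dot Y\\ 0}=UZ^\ast\dot Y$ sweeps out \emph{all} $n\times(n-r)$ matrices with columns in $\mathrm{range}(U)$ as $\dot Y$ varies, the projection of the kernel to $\dot X$ is $\{\dot X:\calA(\dot X)=0,\ \dot X\smtx{Y^\ast\\ I}\text{ has columns in }\mathrm{range}(U)\}$ --- strictly larger, in general, than the tangent space $\{\dot X\in\mathrm{span}\,\face:\calA(\dot X)=0\}$ that you use to size $K$. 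The paper's own worked example (Section~\ref{sec:DruWoExample}) exhibits exactly this failure: after fixing $X_{11},X_{12}$, the Jacobian $J(F)$ at the target solution has rank $2$ (three equations, four unknowns), the point is a \emph{singular} point of the complex variety, and the complex solution set through it is a curve. The solution is therefore not isolated in the complex variety; its isolation is a purely real phenomenon that no full-rank/IFT argument can certify.

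The missing ingredient is the paper's Lemma~\ref{prop:OmegaExists}: on the ball $\Omega^\ast=\{X:\|X-X^\ast\|<\rho(X^\ast)\}$, eigenvalue continuity combined with the rank constraint imposed by \eqref{eq:GALKDJHGALKH:b} forces any \emph{real} solution $X$ to be PSD of rank exactly $r$, hence to lie in $P\subseteq\face$, and then forces the auxiliary matrix to equal the fixed matrix $Y_U$. Over the reals and locally, this converts the nonlinear system into the linear system $\mathcal{Q}(Y_U)X_{\text{hvec}}=q$, whose solution set is an affine subspace; fixing the coordinates in the complement of a maximal independent column set of $\mathcal{Q}(Y_U)$ leaves exactly one point, which stays inside $\Omega^\ast$ provided $\|\tilde X-X^\ast\|$ is below the explicit threshold $\delta_{\iota,J,I}$. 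Your continuity-of-eigenvalues observation is precisely this mechanism, but it must be promoted from a feasibility check to the \emph{uniqueness} argument, replacing the implicit function theorem entirely.
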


\rev{A precise definition of $\delta$ is given in the proof of \Cref{th:method}. 
Concerning $\epsilon_{max}$, this is a tolerance value that allows to numerically compute
maximal sets of linearly independent columns in Steps 2 and 4 of our main \Cref{alg:method}.}

There may be several ways to exploit this result. One possibility could be to refine the approximate solution $\tilde X$ and the associated matrix $\tilde Y$
(which is easily computable from $\tilde X$) by applying the Newton method for system~\eqref{eq:GALKDJHGALKH}.\footnote{\rev{The formula
for computing $\tilde Y$ from $\tilde X$ will be specified later, in Algorithm~\ref{alg:method}. The mapping $\tilde Y(\tilde X)$
will satisfy the following properties: (i) it is continuous in some open
neighborhood of $\tilde X$; (ii) 
matrix $Y=\tilde Y(X^\ast)$ is the unique solution of~\eqref{eq:GALKDJHGALKH:b}
for fixed $X=X^\ast$.}}
For some polynomial systems this method converges very slowly near the optimum or even diverges; in that case one could try to solve~\eqref{eq:GALKDJHGALKH} using techniques from numerical algebraic geometry such as those implemented in the software Bertini~\cite{BertiniBook}. The approximate solution $(\tilde X,\tilde Y)$ could allow the method to focus on the desired zero-dimensional component, and use the regime known as the {\em endgame}.

In this paper we explore a different direction: we solve system~\eqref{eq:GALKDJHGALKH} using exact methods from real algebraic geometry, similar to~\cite{HNS2015c}.
Accordingly, our experimental results focus on a comparison with~\cite{HNS2015c}. Note that the latter paper
also reduces the problem to a system of polynomial equations of the form~\eqref{eq:GALKDJHGALKH:a}-\eqref{eq:GALKDJHGALKH:b}, but with the following differences:
(1) the algorithm in \cite{HNS2015c} has no a priori information about a feasible rank and thus needs to exhaustively search over all 
(exponentially many) kernel profiles until a solution is found;
(2) it provides a guarantee that for one of these kernel profiles the set of real solutions contains a (possibly positive-dimensional) connected component whose points $(X,Y)$ satisfy $X\in P$;
(3) it searches for a minimum-rank solution, whereas we search for a maximum-rank solution.

As in~\cite{HNS2015c}, we use a critical point method to find one real solution per component. This method works under certain assumptions
(such as complete intersection of the corresponding variety or finiteness of the number of solutions of some critical equations);
on the instances that we tested 
these assumptions were often violated for the method in~\cite{HNS2015c} but not for our hybrid method,
and accordingly the hybrid method was able to certify feasibility of many instances on which~\cite{HNS2015c} failed.

\myparagraph{Outline} The paper is organized as follows. \Cref{sec:prelim} contains the main theoretical ingredients and notations. In \Cref{sec:incidence} we make a link between determinantal varieties and facial reduction for SDP. \Cref{sec:descMethod} describes a hybrid algorithm, which is applied to a collection of instances (listed in \Cref{appA}) of degenerate SDPs of the literature in \Cref{sec:numerical}. An example is developed in full details in \Cref{sec:DruWoExample}.


\section{Preliminaries}
\label{sec:prelim}
\phantom{}

\myparagraph{Notation for matrices} 
All norms $||\cdot||$ for vectors and matrices used in this paper are 2-norms.
The Frobenius norm for matrices is denoted by $||\cdot||_F$.

For a matrix $A\in\mathbb R^{p\times q}$ we denote by
$\sigma_i(A)\ge 0$ the $i$-th singular value of $A$; if $i>\min\{p,q\}$ then $\sigma_i(A)=0$ by definition.
We denote by $\sym^p_{\mathbb F}$ the set of $p\times p$ symmetric matrices with coefficients in a field ${\mathbb F}$ and for ${\mathbb F}=\R$ we simply write $\sym^p$.
If $A$ is real symmetric of size $p$ ($A \in \sym^p$) then $\lambda_i(A)$ denotes the $i$-th largest eigenvalue of $A$.
A matrix $A \in \sym^p$ is positive semidefinite ($A \succeq 0$) if $\lambda_p(A) \geq 0$ and positive
definite ($A \succ 0$) if $\lambda_p(A)>0$. Recall that if $A \succeq 0$ then $\lambda_i(A)=\sigma_i(A)$ for all $i$.

The following inequalities are well known (see e.g.~\cite[Section 8]{golub2013matrix}):
\begin{subequations}
\begin{eqnarray}
|\sigma_i(A)-\sigma_i(B)| & \le & ||A-B||\qquad \forall A,B\in \mathbb R^{p\times q},\; \forall i \label{eq:sigmaInequality} \\
|\lambda_i(A)-\lambda_i(B)| & \le & ||A-B||\qquad \forall A,B\in \sym^p,\; \forall i \label{eq:lambdaInequality}
\end{eqnarray}
\end{subequations}
We will also denote $\rho(A)=\sigma_r(A)$ where $r=\text{rank}(A)$.
Clearly, for every non-zero matrix $A$ we have $\rho(A)>0$.

For a set of rows $I\subseteq[p]$ let $A_I\in\mathbb R^{|I|\times q}$ be the submatrix of $A$ formed by the rows in $I$. 
Similarly, for a set of columns $J\subseteq[q]$ let $A^J\in\mathbb R^{p\times|J|}$ be the submatrix of $A$ formed by the columns in $J$.

The transpose of matrix $A$ is denoted by $A^\top$.
In this paper this operation is applied only to matrices with real entries. We never use conjugate transpose for complex matrices, and accordingly we reserve the symbol ``$^\ast$'' for a different purpose ($X^\ast$ will denote an exact solution of the SDP system).

\myparagraph{Semidefinite programming}
Let us equip the vector space $\sym^\size$ of real symmetric matrices of size $\size$ with the Frobenius inner product:
\[
\la M, N \ra_{F} := \text{Trace}(MN)=\sum_{i,j} M_{ij}N_{ij}.
\]
for $M=(M_{ij}), \, N=(N_{ij}) \in \sym^\size$. The cone of positive semidefinite matrices (PSD cone) is $\sym^\size_+ = \{M \in \sym^\size : M \succeq 0\}$ is a convex closed basic semialgebraic set in $\sym^\size$, and its interior is the open convex cone of positive definite matrices: $\sym^\size_{++} := \text{int}(\sym^\size_+)\subseteq \sym^\size$. We denote by $\la u, v \ra_{\R} = \sum u_iv_i$ the Euclidean inner product on $\R^\cons$.

A semidefinite program in standard primal form is given by
\begin{equation}
  \label{SDPp}
\begin{array}{rcll}
  p^* & := & \inf_{X \in \sym^\size} & \la C, X \ra_{F} \\
  &    & \text{s.t.}         & \affmap(X) = b \\
  &    &                     & X \in \sym^\size_+
\end{array}
\end{equation}
where $\affmap \mymid \sym^\size \to \R^\cons$ is a linear map defined by $\affmap(X) = (\la A_{1},
X\ra_F, \ldots, \la A_{m}, X\ra_F)$ for some real symmetric matrices
$A_{1}, \ldots, A_{m} \in \sym^\size,$ $b \in \R^\cons$, and $C \in \sym^\size$.
The feasible sets of the program \eqref{SDPp} is denoted by
\begin{equation}\label{feassetp}
\pset := \{X \in \sym^\size \mymid \affmap(X)=b, X \succeq 0\}.
\end{equation}

Problem \eqref{SDPp} with $P\ne\varnothing$ is called {\em strongly feasible} if $P$ contains a matrix $X$ with $X\succ 0$, and {\em weakly feasibly}
otherwise. We will be mainly interested in weakly feasible SDPs. (Note that if~\eqref{SDPp} is strongly feasible then $P$ contains rational-valued matrices.) 

\myparagraph{Algebraic and semialgebraic geometry}
We refer to \cite{CLO} for basics of (computational) algebraic geometry, and we recall here the main definitions
needed in this work. A (complex) algebraic variety is a set of the form 
\begin{equation}\label{variety}
V = \{x \in \C^n : f_1(x) = 0, \ldots, f_s(x)=0\}
\end{equation}
for polynomials $f_1,\ldots,f_s \in {\mathbb F}[x]$ with coefficients in some subfield ${\mathbb F} \subseteq \C$ 
(we usually fix ${\mathbb F}=\Q$ or ${\mathbb F}=\R$). Of course defining $I = \langle f_1,\ldots,f_s \rangle \subseteq {\mathbb F}[x]$ the ideal generated by $f_1,\ldots,f_s$, 
remark that every $f \in I$ vanishes over $V$, in other words $V = V(I)$ with 
$$V(I) := \{x \in \C^n : f(x)=0, \forall f \in I\}.$$
The vanishing ideal of a set $W \subseteq \C^n$ is defined as
$$
I(W) := \{f \in \C[x] : f(x) = 0, \, \forall x \in W\}
$$
and by Hilbert's Nullstellensatz \cite[\S 4.1]{CLO}, one has the correspondence $I(V(J)) = \sqrt{J}$, where $\sqrt{J} := \{f \in \C[x] : 
f^m \in J, \exists m \in \mathbb{N}\}$ is the radical ideal of $J$. An ideal $I$ is called radical if $I=\sqrt{I}$.
The smallest algebraic variety containing a set $W$ is denoted by $\overline{W}$ and called its Zariski-closure. 

A variety $V \subseteq \C^n$ is called irreducible if it is not the union of two proper subvarieties, and every variety
is finite union of irreducible varieties called its irreducible components.
The dimension of an algebraic variety $V \subseteq \C^n$ is the minimum integer $d = \dim(V)$ such that the intersection 
$V \cap H_1 \cap \cdots \cap H_d$ of $V$ with $d$ generic hyperplanes $H_i$ is finite.
The degree of an irreducible variety $V \subseteq \C^n$ is the cardinality of the intersection of $V$ with $\dim(V)$
generic hyperplanes, and the degree of any variety is the sum of the degrees of its irreducible components. 
A variety whose irreducible components have the same dimension is called equidimensional.
An ideal $I$ with $V(I)\subseteq \C^n$ is in complete intersection if it can be generated by $\text{codim}(V(I))
= n - \dim(V(I))$ many polynomials, and with abuse of notation $V$ is said in complete intersection if this is the case
for $I(V)$.

Let $V \subseteq \C^n$ be a variety with $I(V)=\langle f_1, \ldots, f_s \rangle$.
For a point $p\in V$ the tangent space $T_pV$ at $p$ to $V$ is the kernel of the Jacobian matrix 
$$
J(f_1,...,f_s) := \left(\frac{\partial f_i}{\partial x_j}\right)_{\begin{smallmatrix} 1 \leq i \leq s \\ 1 \leq j \leq n \end{smallmatrix}}
$$ 
evaluated at $p$. A point \( p \in V \) is said to be a \textit{regular point} if the dimension of the tangent space \( T_p V \) at \( p \) coincides with the local dimension of \( V \) at \( p \) (the maximum of the dimensions of the irreducible components containing $p$). 
When $V$ is equidimensional of dimension $d$, the Jacobian 
$J(f_1,...,f_s)$ has rank $\leq n-d$ and equality holds exactly at regular points.
We denote the set of regular points as \( \text{Reg}(V) \), and its complement, \( \text{Sing}(V) := V \setminus \text{Reg}(V) \), is referred to as the set of singular points of \( V \). 

For ${\mathbb F}=\R$ and $V$ as in \eqref{variety}, the set $V(\R) := V \cap \R^n$ is called a real algebraic variety, and it
is defined by the vanishing of finitely many real polynomials. Given $g_1,\ldots,g_t \in \R[x]$, the set
$$
S = \{x \in \R^n : g_1(x) \geq 0, \ldots, g_t(x) \geq 0\}
$$
is called a basic closed semialgebraic set.

\myparagraph{Determinantal and incidence varieties}
We refer to \cite{eisenbud1988linear,harris1984symmetric} for the general theory of determinantal
varieties and we recall below the main definitions in the context of LMI.

Let $A_1,\ldots,A_\cons \in \sym^\size$, be the matrices defining the map $\mathscr{A}$
and $b \in \R^\cons$ the vector in \Cref{SDPp}. The set
$$
\detv_\rnk := \{X \in \sym^\size_\C \mymid \rank(X) \leq r, \affmap(X) = b\}
$$
is called the determinantal variety associated with $\mathscr{A},b$ and $r$.
It is the algebraic variety defined by the vanishing of the $(r+1)\times(r+1)$ minors of
the matrix $X$ (plus linear constraints $\mathscr{A}(X)=b$). 
For some $\rnk \in \{0,1,\ldots,m\}$, the real variety $\detv_\rnk(\R) := \detv_\rnk \cap \sym^n$ intersects the 
feasible set $P$.

From a computational point of view, minors are not easy to handle. Instead, the rank constraint is equivalent to the existence 
of a matrix of high rank whose columns generate the kernel of the original matrix. The following set
$$
\inc_r := \{(X,Y) \in \sym^\size_\C \times \C^{\size \times (\size-\rnk)} \mymid \affmap(X)=b, X Y = 0, \rank(Y) = \size-\rnk\}
$$
is associated with the variety $\detv_r$; indeed, $\detv_r$ is the projection of $\inc_r$ to $\sym^\size_\C$. 
Remark that $\inc_r$ is not an algebraic variety, since the constraint on
the rank of $Y$ is not closed in the Zariski topology, but instead a constructible set (an open subset of a Zariski 
closed set). We use the following view of $\inc_r$ from the coordinate charts (see \cite{HNS2015c}):
\begin{equation}\label{sssec:det_inc}
\inc_{r,\iota} := \{(X,Y) \in \sym^\size_\C \times \C^{\size \times (\size-\rnk)} \mymid \affmap(X)=b, X Y = 0, Y_{[n]-\iota} = I_{\size-\rnk}\}
\end{equation}
for $\iota \subseteq [\size]=\{1, \ldots, \size\}$, of cardinality $|\iota| = \rnk$, and $Y_{\iota'}$ is the
matrix obtained by selecting the rows of $Y$ in $\iota'=[n]-\iota$. Note that $\inc_{\rnk,\iota}$ is an algebraic set,
and  $\inc_{r} = \cup_{|\iota|=r} \inc_{r,\iota}$.
For a field ${\mathbb F}\in\{\R,\C\}$ and subset $C\subseteq \sym^\size_{\mathbb F} \times {\mathbb F}^{\size \times (\size-\rnk)}$ let us denote $\pi_{\sym^\size_{\mathbb F}}(C)=\{X\in\sym^\size_{\mathbb F}\::\:\exists Y\in {\mathbb F}^{n\times(n-r)}\mbox{ s.t.\ }(X,Y)\in C\}$ to be the projection of $C$ to $\sym^\size_{\mathbb F}$.
Using this notation, we define set
$$
\incproj_{\rnk,\iota}:=\pi_{\sym^\size_\C}(\inc_{\rnk,\iota})
$$
We also define the real traces of 
 $\inc_{\rnk,\iota}$ and $\incproj_{\rnk,\iota}$ as follows:
\begin{align*}
\inc_{\rnk,\iota}(\R) &= \inc_{\rnk,\iota} \cap (\sym^\size \times \R^{\size \times (\size-\rnk)})  \\
\incproj_{\rnk,\iota}(\R) &= \incproj_{\rnk,\iota} \cap \sym^\size 
=\pi_{\sym^\size}(\inc_{\rnk,\iota}(\R))
\end{align*}
where the last equality holds due to the following observation:
if $(X,Y)\in \inc_{r,\iota}$ with $X\in\sym^\size$ then $(X,{\mathscr Re}\, Y)\in \inc_{r,\iota}(\R)$.
Note that $\incproj_{\rnk,\iota}(\R)$ may not be algebraic; it is a constructible set. 

Incidence varieties are used in \cite{HNS2015c} to solve feasibility problems of generic instances of LMI, and the following Theorem is a refinement of results in \cite{HNS2015c}.

\begin{theorem}\label{th:HNS2}
Assume $P \neq \varnothing$. Let $X^\ast\in \sym^n_+$ be a minimum-rank solution of~\eqref{feassetp}, and let $\iota\subseteq[n]$
be a maximal set of linearly independent columns of $X^\ast$, with $r=|\iota|$. 
\begin{itemize}
    \item[(a)] The constructible set $\incproj_{\rnk,\iota}(\R)$ has a connected component $C$ with $X^\ast\in C\subseteq \pset$.
    \item[(b)] The real variety $\inc_{r,\iota}(\R)$ has a connected component $C'$ with $X^\ast\in\pi_{\sym^n}(C')\subseteq \pset$
\end{itemize}
\end{theorem}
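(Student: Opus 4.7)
The plan is a topological argument based on the Schur complement, propagating positive semidefiniteness from $X^\ast$ throughout a suitable connected component. I would begin with a small linear-algebra lemma: if $X\in\sym^n_+$ has rank $r$ and $\iota\subseteq[n]$ has cardinality $r$, then the columns of $X$ indexed by $\iota$ form a basis of the column space of $X$ if and only if the principal submatrix $X_{\iota,\iota}$ is positive definite. This is immediate by writing $X=LL^\top$ with $L\in\R^{n\times r}$ of full column rank and noting that $X_{\iota,\iota}=L_\iota L_\iota^\top$, which is invertible iff $L_\iota$ is, and being PSD is then automatically positive definite. Applied to $X^\ast$ and the given $\iota$, this yields $X^\ast_{\iota,\iota}\succ 0$.

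For part (a), let $C$ be the connected component of $\incproj_{r,\iota}(\R)$ (in the Euclidean topology inherited from $\sym^n$) containing $X^\ast$ and set $S:=C\cap\sym^n_+$. Since the PSD cone is closed in $\sym^n$, the set $S$ is closed in $C$ and contains $X^\ast$. If I can show $S$ is also open in $C$, connectedness of $C$ forces $S=C$, which is exactly the statement $C\subseteq\pset$.

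The openness step is the heart of the proof. Pick any $X\in S$; then $X\in\pset$, so by the minimum-rank hypothesis $\rank(X)\ge r$, while $X\in\incproj_{r,\iota}(\R)$ gives $\rank(X)\le r$, hence $\rank(X)=r$. The relation $XY=0$ with $Y_{[n]-\iota}=I_{n-r}$ shows that the columns of $X$ indexed by $[n]-\iota$ are linear combinations of those indexed by $\iota$; since $|\iota|=r=\rank(X)$, the latter form a basis of the column space of $X$, and the lemma gives $X_{\iota,\iota}\succ 0$. Permuting so the indices in $\iota$ appear first, $X$ takes the block form $\smtx{A & B \\ B^\top & D}$ with $A=X_{\iota,\iota}\succ 0$, and the constraints defining $\incproj_{r,\iota}$ force $Y_1=-A^{-1}B$ and $D=B^\top A^{-1}B$, so the Schur complement of $A$ in $X$ vanishes and $X\succeq 0$ becomes equivalent to $A\succeq 0$. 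For any $X'\in C$ sufficiently close to $X$, continuity gives $A'=X'_{\iota,\iota}\succ 0$, and because $X'\in\incproj_{r,\iota}(\R)$ the same Schur-complement computation yields $X'\succeq 0$. Thus a neighborhood of $X$ in $C$ lies in $S$, so $S$ is open.

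Part (b) follows by the same argument on $\inc_{r,\iota}(\R)$: at $X^\ast$ the matrix $Y^\ast$ is uniquely determined because $X^\ast_{\iota,\iota}$ is invertible; let $C'$ be the connected component of $\inc_{r,\iota}(\R)$ containing $(X^\ast,Y^\ast)$ and apply the Schur-complement argument to the $X$-coordinate to show that $\{(X,Y)\in C':X\succeq 0\}$ is open, closed, and nonempty in $C'$. I expect the main obstacle to be this openness step: it requires combining the minimum-rank hypothesis (to upgrade $\rank(X)\le r$ to equality, hence $X_{\iota,\iota}\succ 0$) with the rigidity of the incidence constraints (to collapse the Schur complement identically to zero). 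Without the minimum-rank assumption, a nearby point in $\incproj_{r,\iota}(\R)$ could have strictly smaller rank and singular $X_{\iota,\iota}$, breaking the neighborhood argument.
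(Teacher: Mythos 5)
Your proof is correct, and it reaches the conclusion by a mechanism different from the paper's. The paper also reduces everything to a connectedness argument on the component $C$ of $\incproj_{\rnk,\iota}(\R)$ containing $X^\ast$, but it proceeds by contradiction along a continuous curve $\{X^{(t)}\}$ from $X^\ast$ to a hypothetical non-PSD point, tracking the eigenvalues $\lambda_i(X^{(t)})$ via the perturbation bound $|\lambda_i(A)-\lambda_i(B)|\le\|A-B\|$ and running a supremum argument on $\{t:\lambda_n^{(t)}\ge 0\}$: the rank bound $\rank(X^{(t)})\le r$ forced by the kernel witness, together with minimality of $r$ (which keeps $\lambda_r$ bounded away from $0$ near the crossing time), pins $\lambda_{r+1}=\dots=\lambda_n=0$ slightly beyond the supremum, a contradiction. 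Your clopen argument replaces the eigenvalue tracking with the Schur-complement observation that at any PSD point of $C$ the principal block $X_{\iota\iota}$ is positive definite (your opening lemma is essentially the paper's Lemma~\ref{lemma:SymSubmatrix}, proved there for a different purpose) and that the incidence constraints force the Schur complement of $X_{\iota\iota}$ to vanish identically on $\incproj_{\rnk,\iota}(\R)$ wherever $X_{\iota\iota}$ is invertible, so positive semidefiniteness propagates to a neighborhood. Both proofs use the minimum-rank hypothesis at exactly the same place (to upgrade $\rank(X)\le r$ to equality at PSD points of $C$). Your route has the minor advantage of not needing the component $C$ to be path-connected, a fact the paper uses implicitly (it holds because $\incproj_{\rnk,\iota}(\R)$ is semialgebraic, hence locally path-connected, but this is never remarked); it also makes the local rigidity of the component around a PSD point explicit, which is the structural fact exploited later in Lemma~\ref{prop:OmegaExists}. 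For part (b) the paper is slightly more economical than you: it simply notes that $\pi_{\sym^n}(C')$ is a connected subset of $\incproj_{\rnk,\iota}(\R)$ containing $X^\ast$, hence contained in $C$, and invokes part (a), rather than rerunning the clopen argument upstairs.
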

\begin{proof}
The definition of $\iota$ implies that there exists $Y^\ast\in\mathbb R^{n\times(n-r)}$ with $X^\ast Y^\ast=0$ and $Y^\ast_{[n]-\iota}=I_{n-r}$.
Clearly, we have $(X^\ast,Y^\ast)\in \inc_{r,\iota}(\R)$ and hence $X^\ast \in \incproj_{\rnk,\iota}(\mathbb{R})$.
Let $C$ be the component of $\incproj_{\rnk,\iota}(\mathbb{R})$ containing $X^\ast$.

\myparagraph{(a)} We need to show that $C\subseteq P$.
If $r=0$ then all points $(X,Y) \in \inc_{r,\varnothing}(\R)$ satisfy $X=0$, hence $C=\{X^\ast\}=\{0\}$ and the claim is straightforward. Suppose that $r>0$
and the claim is false, then there exists continuous curve $\{X^{(t)}\}_{t\in[0,1]}\subseteq C\subseteq \incproj_{\rnk,\iota}(\mathbb{R})$
with $X^{(0)}=X^\ast$ and $X^{(1)}=X\notin P$. For each $t\in[0,1]$ there must exist matrix $Y^{(t)}$ with
$(X^{(t)},Y^{(t)})\in \inc_{r,\iota}(\R)$.
Denote $\lambda^{(t)}_i=\lambda_i(X^{(t)})$ for $t\in[0,1]$, so that $\lambda^{(t)}_1\ge\ldots\ge\lambda^{(t)}_n$. These values satisfy the following properties for each $t\in[0,1]$:
\begin{itemize}
\item[(i)] At least $n-r$ values in $\lambda^{(t)}_1,\ldots,\lambda^{(t)}_n$ are zeros,
or equivalently $\rank(X^{(t)})\le r$.
This follows from conditions $X^{(t)}Y^{(t)}=0$ and $\rank(Y^{(t)})=n-r$, which imply that $\rank(X^{(t)})\le n-\rank(Y^{(t)})\le n-(n-r)=r$.
\item[(ii)] If $\lambda^{(t)}_n\ge 0$ then $\lambda^{(t)}_{r+1}=\ldots=\lambda^{(t)}_{n}=0$ and $\lambda^{(t)}_r>0$.
The first claim follows from~(i); if $\lambda^{(t)}_r=0$ then $\rank(X^{(t)})\le r-1$ contradicting the minimality of $r$
(note that $X^{(t)}\succeq 0$ and hence $X^{(t)}\in P$).
\item[(iii)] For each $i\in[n]$, $\lambda^{(t)}_i$ is a continuous function of $t$. This holds by continuity of the curve and by eq.~\eqref{eq:lambdaInequality}.
\end{itemize}
Conditions $X^{(0)}\in P$ and $X^{(1)}\notin P$ give $\lambda_n^{(0)}\ge 0$ and $\lambda_n^{(1)}< 0$.
Denote $s=\sup \{t\in[0,1]\::\:\lambda_n^{(t)}\ge 0\}$,
then $\lambda_n^{(s)}\ge 0$ by continuity and hence $s<1$. 
By (ii) we have $\mu := \lambda_r^{(s)}>0$.
By continuity, there exists $u\in(s,1]$ such that $\lambda_r^{(t)}>\mu/2$ for all $t\in[s,u]$.
Condition (i) thus gives  $\lambda_{r+1}^{(t)}=\ldots=\lambda_{n}^{(t)}=0$ for all $t\in[s,u]$,
and thus $\sup \{t\in[0,1]\::\:\lambda_n^{(t)}\ge 0\}\ge u>s$, which is a contradiction.

\myparagraph{(b)} Let $C'$ be the connected component of $\inc_{r,\iota}(\mathbb{R})$ containing $(X^\ast,Y^\ast)$.
Clearly, we have $\pi_{\sym^n}(C')\subseteq C$, and so the claim holds by part (a).
\end{proof}


\section{Incidence varieties and facial reduction}
\label{sec:incidence}

We consider the feasibility problem of the primal SDP in \eqref{SDPp}, that is the system:
\begin{equation}
\label{FEASp}
\affmap(X) = b, \,\,\,\,\,\,\,\, X \succeq 0.
\end{equation}
Let $P = \{X \in \sym^\size \mymid \affmap(X)=b, X \succeq 0\}$ be the set of feasible solutions as in~\eqref{feassetp},
and let $\rnk$ be the maximum rank of a matrix in $P$.
From now on, we assume that $P\ne\varnothing$ and $r>0$.
A matrix $X\in P$ of rank $r$ will be called a {\em maximum-rank solution}.

Below we recall some basic facts about facial reduction for SDPs, see e.g.~\cite{drusvyatskiy2017many}.
Let~$\face$ be the minimal face of the SDP cone $\sym^\size_+$ containing $P$. 
It can be represented by a full-rank
matrix $U \in \R^{\size \times \rnk}$ as follows: 
\begin{equation}\label{eq:face-range}
\face = \{X \in \sym^\size_+\::\: \text{range}(X)\subseteq \text{range}(U)\}=\{U Z U^\top\::\: Z \in \sym^r_+\}.
\end{equation}
In other words, $\face$ is linearly isomorphic (through $UZU^\top \to Z$) to a copy of the cone
$\sym^\rnk_+$, and in particular it has dimension $\binom{\rnk+1}{2}$. Any matrix in the relative
interior of $P$ is in the relative interior of $\face$ and hence has rank $\rnk$; every other
matrix in $P$ has rank $< \rnk$.

Let $\iota\subseteq[n]$ be a maximal set of linearly independent rows of $U$, with $|\iota|=r$.
We assume for notational convenience that $\iota=[r]$
(this can be achieved by renaming rows and columns of $X$).
Since $U$ is full-rank, we can apply Gaussian elimination to compute the column reduced
echelon form of $U$, without changing its range. 
Accordingly, we can assume w.l.o.g.\ that
\begin{equation}
U=\pmtx{I_r \\ U_0},\qquad \text{with } U_0\in \R^{(n-r) \times r}. \label{eq:Uform}
\end{equation}

Then for the matrix $Y_U := -{U_0}^\top$ we have
\begin{subequations}
  \label{eq:face}
  \begin{align}
    \face =\{X \in \sym^\size_+ \mymid \text{range}(X) \subseteq \text{range}(U)\}
    &=\{X\in \sym^\size_+ \mymid \smtx{Y_U^\top & I_{\size-\rnk}}
    X = 0^{(\size-\rnk)\times \size}\} \label{eq:face:a} \\
    &=\{X\in \sym^\size_+ \mymid X \smtx{Y_U \\ I_{\size-\rnk}}=0^{\size
      \times (\size-\rnk)}\} \label{eq:face:b}
  \end{align}
\end{subequations}

Now  consider the following system of equations (in variables $X \in \sym^\size_\C$ and $Y \in \C^{\size \times (\size-\rnk)}$):
\begin{subequations}\label{eq:GHOAISFH}
\begin{align}
 \affmap(X)&=b \\
 X \left(\begin{matrix} Y \\ I_{\size-\rnk}\end{matrix}\right) & = 0^{\size \times (\size-\rnk)} \label{eq:GALHDGKA}
\end{align}
\end{subequations}


Let $\inc_{\rnk,\iota}\subseteq \sym^\size_\C \times \C^{\size \times (\size-\rnk)}$ be the set of complex solutions of~\eqref{eq:GHOAISFH}.
Note that it is essentially the same incidence variety defined in the previous section (for $\iota = [r]$),
modulo notation: matrix $Y$ in eq.~\eqref{sssec:det_inc} has a larger size, but all additional entries are fixed to constants.
We define $\incproj_{\rnk,\iota}$, $\inc_{\rnk,\iota}(\R)$ and $\incproj_{\rnk,\iota}(\R)$ in the same way as before:
\begin{subequations}
\begin{align}
  \incproj_{\rnk,\iota}  := \pi_{\sym^n_\C}(\inc_{\rnk,\iota}) 
  & = \{X \in \sym^\size_\C \mymid \exists Y \in \C^{\rnk \times (\size-\rnk)} \;\mbox{s.t.}\; (X,Y) \in \inc_{\rnk,\iota}\} \\
                        & = \{X \in \sym^\size_\C \mymid \exists Y \in \C^{\rnk \times (\size-\rnk)} \;\mbox{s.t.}\; \affmap(X)=b, X \left(\begin{smallmatrix} Y \\ I_{\size-\rnk}\end{smallmatrix}\right) = 0^{\size \times (\size-\rnk)}\} 
\end{align}
\end{subequations}
\begin{equation}
\inc_{\rnk,\iota}(\R)  = (\sym^\size \times \R^{\size \times (\size-\rnk)}) \cap \inc_{\rnk,\iota} \hspace{40pt} 
\incproj_{\rnk,\iota}(\R) = \sym^\size \cap \incproj_{\rnk,\iota}=\pi_{\sym^n}(\inc_{\rnk,\iota}(\R))
\end{equation}
By construction, $P\subseteq \face \cap \{X \in \sym^\size \mymid \affmap(X)=b\} \subseteq \incproj_{\rnk,\iota}(\R)$.




\begin{lemma}\label{prop:OmegaExists}
Let $P \neq \varnothing$ and let $X^* = UZ^* U^\top\in P$ be a maximum-rank solution, with $U$ as in~\eqref{eq:Uform} and $Z^*\in \sym^\rnk_{++}$ positive definite.
Define open neighborhood $\Omega^*$ of $X^\ast$ via $\Omega^\ast=\{X\in \sym^\size\::\:||X-X^\ast||<\rho(X^\ast)\}$.
The following holds:
\begin{itemize}
\item[(a)] $P \cap \Omega^* = \incproj_{\rnk,\iota}(\R) \cap \Omega^*$.
\item[(b)] For every $X \in \incproj_{\rnk,\iota}(\R) \cap \Omega^*$ there exists a unique matrix $Y$ such that $(X,Y) \in \inc_{\rnk,\iota}$. This matrix is $Y=Y_U$.  
\end{itemize}
\end{lemma}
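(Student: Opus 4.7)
The plan is to prove both claims by exploiting the two perturbation bounds~\eqref{eq:sigmaInequality} and~\eqref{eq:lambdaInequality} applied to index $\rnk$. For (a), the inclusion $P\cap\Omega^*\subseteq \incproj_{\rnk,\iota}(\R)\cap\Omega^*$ is already recorded just before the lemma, so only the reverse inclusion needs argument. Fix $X\in\incproj_{\rnk,\iota}(\R)\cap\Omega^*$ with some witness $Y\in\R^{\rnk\times(\size-\rnk)}$. The $\size-\rnk$ columns of $\smtx{Y\\ I_{\size-\rnk}}$ are linearly independent and lie in $\ker X$, hence $\rank(X)\le \rnk$. Applying~\eqref{eq:sigmaInequality} at index $\rnk$, combined with $\|X-X^*\|<\rho(X^*)=\sigma_\rnk(X^*)$, gives $\sigma_\rnk(X)>0$, so in fact $\rank(X)=\rnk$.

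Next, because $X^*\succeq 0$ has rank $\rnk$, its $\rnk$-th largest eigenvalue equals $\sigma_\rnk(X^*)=\rho(X^*)$; the eigenvalue inequality~\eqref{eq:lambdaInequality} at index $\rnk$ then forces $\lambda_\rnk(X)>0$. Since $X$ is symmetric with exactly $\rnk$ non-zero eigenvalues and with the $\rnk$-th largest one positive, all its non-zero eigenvalues are positive and the remaining $\size-\rnk$ are zero, so $X\succeq 0$. Combined with $\affmap(X)=b$ this gives $X\in P$, proving (a).

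For (b), existence is immediate: by (a) we have $X\in\face$, so the alternative description~\eqref{eq:face:b} of $\face$ yields $X\smtx{Y_U\\ I_{\size-\rnk}}=0$, i.e.\ $(X,Y_U)\in\inc_{\rnk,\iota}$. For uniqueness, any witness $Y$ produces the matrix $\smtx{Y-Y_U\\ 0}$ whose columns lie in $\ker X$. Since $\dim\ker X=\size-\rnk$ by (a) and the columns of $\smtx{Y_U\\ I_{\size-\rnk}}$ already form a basis of $\ker X$, we may write $\smtx{Y-Y_U\\ 0}=\smtx{Y_U\\ I_{\size-\rnk}}M$ for some $M\in\R^{(\size-\rnk)\times(\size-\rnk)}$; reading off the bottom block forces $M=0$, hence $Y=Y_U$.

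The main subtlety lies in the step in (a) that upgrades the pair of local facts ``$\rank(X)=\rnk$'' and ``$\lambda_\rnk(X)>0$'' to $X\succeq 0$: one must notice that these two facts together fully determine the sign pattern of the spectrum of $X$ as $\rnk$ strictly positive eigenvalues and $\size-\rnk$ zero eigenvalues, which would fail without the rank-at-most-$\rnk$ bound coming from the kernel witness $Y$. Everything else is a routine application of Weyl-type perturbation and of the canonical parametrization of $\face$ in~\eqref{eq:face:b}.
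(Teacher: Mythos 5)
Your proof is correct and follows essentially the same route as the paper: part (a) combines the kernel witness (giving at least $\size-\rnk$ zero eigenvalues) with the Weyl-type perturbation bound~\eqref{eq:lambdaInequality} (giving $\lambda_\rnk(X)>0$) to pin down the spectrum, and part (b) reduces uniqueness to the fact that $\rank(X)=\rnk$ forces $\dim\ker X=\size-\rnk$. Your uniqueness step (expanding $\smtx{Y-Y_U\\ 0}$ in the kernel basis) is a cosmetic variant of the paper's rank argument on the concatenated matrix $\smtx{Y & Y'\\ I & I}$, and your extra use of~\eqref{eq:sigmaInequality} to get $\rank(X)=\rnk$ is harmless but redundant given the eigenvalue count.
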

\begin{proof} 
  {\it Part (a)}. 
   If $X \in P$ then $\affmap(X) = b$ and $(X,Y_U) \in \inc_{\rnk,\iota}$ (since $P\subseteq\calF$), and thus $X \in \incproj_{\rnk,\iota}(\R)$.
  This shows that $P \subseteq \incproj_{\rnk,\iota}(\R)$ and hence $P \cap \Omega^* \subseteq \incproj_{\rnk,\iota}(\R) \cap \Omega^*$.

  Now suppose that $X \in \incproj_{\rnk,\iota}(\R) \cap \Omega^*$. Then there exists $Y$ such that
  $(X,Y) \in \inc_{\rnk,\iota}$, i.e. $(X,Y)$ satisfies system~\eqref{eq:GHOAISFH}.
  Eq.~\eqref{eq:GALHDGKA} implies that $X$ has at least $\size-\rnk$ zero eigenvalues. 
  Since $\text{rank}(X^\ast)=r$,
  we have $\lambda_r(X^\ast)=\rho(X^\ast)>0$.
  By~\eqref{eq:lambdaInequality}, for all $X\in\Omega^\ast$ we have $|\lambda_r(X)-\lambda_r(X^\ast)|\le || X-X^\ast || < \rho(X^\ast)=\lambda_r(X^\ast)$
  and thus $\lambda_r(X) > 0$, i.e.\ $X$ has at least $r$ strictly positive eigenvalues.
  We conclude that $X$ has exactly $n-r$ zero eigenvalues and exactly $r$ strictly positive eigenvalues,
  therefore $X\succeq 0$
  and $\text{rank}(X)=r$.
  Combined with the fact that $\affmap(X)=b$, this implies that $X \in P$. We showed that 
  converse inclusion $\incproj_{\rnk,\iota}(\R) \cap \Omega^* \subseteq P \cap \Omega^*$ is also true.
    
  {\it Part (b)}. As shown in (a), we have $X\in P\subseteq\face$.
  Therefore, $(X,Y_U)\in \inc_{\rnk,\iota}$, i.e.\ it satisfies~\eqref{eq:GHOAISFH}.
  Suppose that there exists $Y \neq Y'$ such that $(X,Y)$ and $(X,Y')$ satisfy~\eqref{eq:GHOAISFH}.
  Denote 
  $$
  C := \begin{pmatrix} Y & Y' \\ I_{\size-\rnk} & I_{\size-\rnk} \end{pmatrix},$$ 
  then
  $XC=0^{\size\times 2(\size-\rnk)}$. Clearly, we have $\text{rank}(C) \geq \size-\rnk+1$: indeed,
  the first $\size-\rnk$ columns of $C$ are linearly independent, and there exists a column among
  the remaining ones which is not in the range of the first $\size-\rnk$ columns, by the assumption
  $Y \neq Y'$. This implies that $\text{rank}(X) \leq \size-\text{rank}(C) \le \size-(\size-\rnk+1)=
  \rnk-1$, which contradicts condition $\text{rank}(X)=r$ shown in part (a).
\end{proof}

\section{Description of the method}\label{sec:descMethod}
Motivated by Lemma~\ref{prop:OmegaExists}, we propose the following hybrid method for certifying the feasibility of system~\eqref{FEASp} (see Algorithm~\ref{alg:method}). (We assume that the system is feasible, i.e.\ $P \neq \varnothing$).
Note that some steps of Algorithm~\ref{alg:method} require approximate computations. First, we need to compute approximate solution $\tilde X$ which is close to some maximum-rank solution $X^\ast$; 
Theorems~\ref{th:method} and~\ref{th:numerical} given later will characterize how close we need to be for the method to work correctly. Second, steps 2 and 4 invoke a procedure for {\bf numerically} computing a maximal set of linearly independent columns of a given matrix. Such procedure will be described later in Section~\ref{sec:rank-revealing}.

An illustration of Algorithm~\ref{alg:method} on a small numerical example is given in Section~\ref{sec:DruWoExample}.

\begin{algorithm}[h] 
{\bf 1}. Solve system~\eqref{FEASp} numerically to get solution $\tilde X\in \sym^\size$ which is close to some maximum-rank solution $X^\ast$.
\\
{\bf 2}. Find numerically a maximal set $\iota\subseteq[n]$ of linearly independent columns of $\tilde X$, let $r=|\iota|$. Assume for notational convenience that $\iota=[r]$ (this can be achieved by renaming variables). Write 
$$
\tilde X:=\begin{pmatrix} \tilde S & \tilde R^\top \\ \tilde R & \tilde W \end{pmatrix}
$$ where $\tilde S =\tilde X^\iota_\iota\in \sym^\rnk$. If $\tilde S$ is singular then terminate with failure, otherwise define 
$\tilde Y := -\tilde S^{-1}\tilde R^\top\in\mathbb R^{r\times(n-r)}$. (Note, \Cref{lemma:SymSubmatrix} implies that $\tilde X\left(\begin{smallmatrix} \tilde Y \\ I_{n-r} \end{smallmatrix}\right)\approx 0^{n\times(n-r)}$). \\
{\bf 3}. Write system~\eqref{eq:GHOAISFH} as
\begin{equation}\label{eq:sysQYXq}
\mathcal{Q}(Y)X_{\text{hvec}}=q
\end{equation}
where $X_{\text{hvec}}$ is a  vector of dimension $k=\frac{{n(n+1)}}{2}$ obtained by vectorizing the lower triangular part of $X$, $\mathcal{Q}(Y)$ is a matrix that depends linearly on $Y$, and $q$ is fixed vector. \\ 
{\bf 4}. Find numerically a maximal set $J\subseteq[k]$ of linearly independent columns of $\mathcal{Q}(\tilde Y)$. \\
{\bf 5}. Augment system~\eqref{eq:GHOAISFH} by fixing variables in $J':=[n]-J$:
\begin{subequations}\label{eq:fixed-variables}
\begin{align}
 \affmap(X)&=b \label{eq:fixed-variables:a} \\
 X \left(\begin{matrix} Y \\ I_{\size-\rnk}\end{matrix}\right) & = 0^{\size \times (\size-\rnk)} \label{eq:fixed-variables:b} \\
 (X_{\text{hvec}})_j & =(\tilde X_{\text{hvec}})_j \qquad \forall j\in J' \label{eq:fixed-variables:c}
\end{align}
\end{subequations}
Compute one point per connected component of the real algebraic variety defined by \Cref{eq:fixed-variables}
as in \cite{HNS2015c} (cf. \Cref{ssec:lagrange}). By \cite[Lemma 3.2]{HNS2015c}, the equations in 
\eqref{eq:fixed-variables:b} at positions $i,j$ 
with $i-j>r$ are implied by other equations and thus can be omitted.
\caption{Hybrid algorithm for system $\affmap(X) = b, \,\,\, X \succeq 0$.}\label{alg:method}
\end{algorithm}

\vspace{0.5cm}

The next observation justifies Step 2.
\begin{lemma}\label{lemma:SymSubmatrix}
Consider a PSD matrix $X = \left(\begin{smallmatrix} S & R^\top \\ R &  W \end{smallmatrix}\right)\in \sym^\size_+$ with $ S \in \sym^\rnk$,
and suppose that $\iota=[r]$ is a maximal set of linearly independent columns (so that the rank of $X$ is $r$).
Then $S$ is non-singular (and thus positive definite), and matrix 
 $Y=-S^{-1} R^\top$ satisfies 
 $X\left(\begin{smallmatrix} Y \\ I_{n-r} \end{smallmatrix}\right) = 0^{n\times(n-r)}$.
\end{lemma}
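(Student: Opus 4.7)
The plan is to use a Cholesky-style factorization of $X$ and then exploit the block structure directly.

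Since $X \succeq 0$ has rank $r$, there exists $L \in \mathbb R^{n \times r}$ of full column rank with $X = L L^\top$. Partition $L = \left(\begin{smallmatrix} L_1 \\ L_2 \end{smallmatrix}\right)$ with $L_1 \in \mathbb R^{r\times r}$ and $L_2 \in \mathbb R^{(n-r)\times r}$. Then the block decomposition of $X = LL^\top$ yields
\begin{equation*}
S = L_1 L_1^\top, \qquad R = L_2 L_1^\top, \qquad W = L_2 L_2^\top.
\end{equation*}

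First I would prove that $S$ is non-singular. The first $r$ columns of $X$, viewed as a single $n\times r$ matrix, are $\left(\begin{smallmatrix}S\\R\end{smallmatrix}\right) = L L_1^\top$. By the maximality assumption on $\iota=[r]$, these columns are linearly independent, so $L L_1^\top$ has rank $r$. Since $L$ has rank $r$, its null space is trivial, so $L_1^\top$ must also have rank $r$, i.e.\ $L_1 \in \R^{r\times r}$ is invertible. Consequently $S = L_1 L_1^\top$ is symmetric positive definite.

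For the second part, with $Y = -S^{-1} R^\top$ the top block of $X\left(\begin{smallmatrix}Y\\I_{n-r}\end{smallmatrix}\right)$ is $S Y + R^\top = -S S^{-1} R^\top + R^\top = 0$ by construction. For the bottom block, using $L_1^\top (L_1 L_1^\top)^{-1} L_1 = I_r$ (valid because $L_1$ is invertible),
\begin{equation*}
R Y + W = -R S^{-1} R^\top + W = -L_2 L_1^\top (L_1 L_1^\top)^{-1} L_1 L_2^\top + L_2 L_2^\top = -L_2 L_2^\top + L_2 L_2^\top = 0.
\end{equation*}
The only subtle step is deducing invertibility of $S$ from the linear independence of the first $r$ columns of $X$; the rest is bookkeeping on the factorization. (Equivalently, one could argue from rank additivity in the Schur complement formula $\rank(X) = \rank(S) + \rank(W - R S^{-1} R^\top)$, which forces the Schur complement to vanish once $\rank(S)=\rank(X)=r$.)
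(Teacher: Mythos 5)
Your proof is correct, and it takes a genuinely different route from the paper's. The paper argues directly from the column structure: since $\iota=[r]$ is maximal, the last $n-r$ columns are linear combinations of the first $r$, giving $\smash{\left(\begin{smallmatrix} R^\top \\ W\end{smallmatrix}\right)} = \smash{\left(\begin{smallmatrix} S \\ R\end{smallmatrix}\right)} Z$ and hence $X = \left(\begin{smallmatrix} I & Z\end{smallmatrix}\right)^\top S \left(\begin{smallmatrix} I & Z\end{smallmatrix}\right)$, so $r=\rank(X)\le\rank(S)$ forces $S$ nonsingular; it then identifies $Z=S^{-1}R^\top$ and checks the two block equations. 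You instead start from a full-rank Gram factorization $X=LL^\top$ (which exists precisely because $X\succeq 0$ of rank $r$), deduce invertibility of $L_1$ from the independence of the first $r$ columns via injectivity of $L$, and verify the kernel identity by the cancellation $L_1^\top(L_1L_1^\top)^{-1}L_1=I_r$. Each approach has a small advantage: the paper's factorization through $S$ uses positive semidefiniteness only at the very end (to upgrade ``nonsingular'' to ``positive definite''), so its nonsingularity argument works for arbitrary symmetric $X$ with that column structure; your Gram-matrix route spends the PSD hypothesis up front but makes the verification that $RY+W=0$ a one-line computation, and your closing remark via the Schur complement rank identity is a clean alternative for that second half (noting, as you do implicitly, that it presupposes $S$ invertible and so cannot replace the first half). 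Both proofs are complete and of comparable length.
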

\begin{proof}
By hypothesis, the last $n-r$ columns of $X$ are linearly dependent on the first $r$ columns. Therefore, we can write
$$ \begin{pmatrix} R^\top \\ W \end{pmatrix} = \begin{pmatrix} S \\ R \end{pmatrix} Z $$
for some matrix $Z \in \mathbb{R}^{r \times (n-r)}$, which means $R^\top = SZ$ and $W = RZ$.  \rev{Hence, $R = Z^\top S$ and $W = Z^\top S Z$. Thus, we can write $X = \begin{pmatrix} I & Z \end{pmatrix}^\top S \begin{pmatrix} I & Z \end{pmatrix}$.  This implies that $r = \text{rank}(X) \leq \text{rank}(S)$, and the first claim holds.}

Since $S$ is non-singular, we have $Z = S^{-1}R^\top$ and $W = RS^{-1}R^\top$.  Using these equations, we can check that $SY + R^\top = 0^{r \times (n-r)}$ and $RY + W = 0^{(n-r) \times (n-r)}$, which yields the second claim.
\end{proof}


\begin{definition}
We say that sets $\iota \subseteq [n]$ and $J \subseteq [k]$ are {\em{valid}} for a maximum-rank solution $X^\ast$ if they can be outputs of steps 2 and 4, respectively, assuming that step 1 computes the vector $X^\ast$, and the computations in steps 2 and 4 are performed exactly
(i.e.\ a maximal set of linearly independent columns is computed {\em exactly} rather than numerically).
\end{definition}

\begin{theorem}\label{th:method}
Each maximum-rank solution $X^\ast$ has an open neighborhood $\tilde\Omega\subset\sym^\size$ 
with the following property.
Suppose that step 1 produces point $\tilde{X}\in\tilde\Omega$,
 and lines 2 and 4 output sets $\iota$ and $J$, respectively, which are valid for $X^\ast$. Then 
 the set of real solutions of system~\eqref{eq:fixed-variables} has a zero-dimensional component with a solution $(X, Y)$ where $X \in \sym^\size_+$.
\end{theorem}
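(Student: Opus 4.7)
The plan is to combine the local structure of $\incproj_{r,\iota}(\R)$ near $X^\ast$ provided by \Cref{prop:OmegaExists} with a transversality argument showing that the fixed-coordinate equations \eqref{eq:fixed-variables:c} cut the local solution set down to a single isolated point near $(X^\ast,Y_U)$. Here (assuming for notational simplicity that $\iota=[r]$) $Y_U=-U_0^\top$ is the matrix from \eqref{eq:Uform} that witnesses $X^\ast\in\face$.

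First I would observe that $(X^\ast,Y_U)$ automatically satisfies \eqref{eq:fixed-variables:a}--\eqref{eq:fixed-variables:b}: the first holds because $X^\ast\in P$, the second by the facial identity \eqref{eq:face:b}. By \Cref{lemma:SymSubmatrix} applied to $X^\ast$, running Step 3 on $X^\ast$ itself returns $\tilde Y=Y_U$, so validity of $J$ means exactly that $\mathcal{Q}(Y_U)^J$ has full column rank $|J|=\rank\mathcal{Q}(Y_U)$. Combining parts (a) and (b) of \Cref{prop:OmegaExists}, on the neighborhood $\Omega^\ast$ of $X^\ast$ we have
\[
P\cap\Omega^\ast \;=\; \incproj_{r,\iota}(\R)\cap\Omega^\ast \;=\; \{X\in\Omega^\ast\,:\,\mathcal{Q}(Y_U)X_{\text{hvec}}=q\},
\]
and for each such $X$ the unique lifting in $\inc_{r,\iota}$ is $Y=Y_U$; so locally this set is a smooth affine slice of dimension $k-|J|$, with tangent space $\ker\mathcal{Q}(Y_U)$ at $X^\ast$.

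The key step is transversality: the restriction of the coordinate projection $v\mapsto v_{J'}$ to $\ker\mathcal{Q}(Y_U)$ is an isomorphism onto $\R^{|J'|}$. Indeed, if $v\in\ker\mathcal{Q}(Y_U)$ with $v_{J'}=0$, then $\mathcal{Q}(Y_U)^J v_J=\mathcal{Q}(Y_U)v=0$, whence $v_J=0$ by the full column rank of $\mathcal{Q}(Y_U)^J$; combined with $\dim\ker\mathcal{Q}(Y_U)=k-|J|=|J'|$, injectivity upgrades to bijectivity. By the inverse function theorem, after possibly shrinking $\Omega^\ast$, the map $X\mapsto X_{\text{hvec},J'}$ from $P\cap\Omega^\ast$ to $\R^{|J'|}$ is a diffeomorphism onto an open set $V$ containing $X^\ast_{\text{hvec},J'}$.

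Finally I would choose $\tilde\Omega\subseteq\Omega^\ast$ small enough that $\tilde X_{\text{hvec},J'}\in V$ for every $\tilde X\in\tilde\Omega$. For any such $\tilde X$ there is a unique $X^\circ\in P\cap\Omega^\ast$ with $X^\circ_{\text{hvec},J'}=\tilde X_{\text{hvec},J'}$, and by the uniqueness of the $Y$-lifting, $(X^\circ,Y_U)$ is the unique real solution of \eqref{eq:fixed-variables} in $\Omega^\ast\times\R^{r\times(n-r)}$; this isolation makes $(X^\circ,Y_U)$ a zero-dimensional connected component of the real solution set, and $X^\circ\in P$ gives $X^\circ\succeq 0$. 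I expect the main obstacle to be the transversality step: it hinges on $J$ being a \emph{maximal} independent set---this is what matches the dimensions $\dim\ker\mathcal{Q}(Y_U)$ and $|J'|$---together with the uniqueness of the $Y$-lifting from \Cref{prop:OmegaExists}; once these are in place, the remainder is a routine application of the inverse function theorem.
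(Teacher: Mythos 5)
Your proposal is correct and follows essentially the same route as the paper: localize via Lemma~\ref{prop:OmegaExists} so that $P\cap\Omega^\ast$ becomes the affine slice $\{X\in\Omega^\ast : \mathcal{Q}(Y_U)X_{\text{hvec}}=q\}$ with unique lifting $Y=Y_U$, then use maximality of $J$ to conclude that fixing the $J'$-coordinates pins down a unique point; the paper carries out your transversality step explicitly, selecting a maximal independent row set $I$ and solving $X^J-(X^\ast)^J=-(\mathcal{Q}^J_I)^{-1}\mathcal{Q}^{J'}_I(X^{J'}-(X^\ast)^{J'})$, which additionally yields an explicit radius $\delta_{\iota,J,I}$. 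The one detail to add is that the theorem must hold for whichever valid pair $(\iota,J)$ steps 2 and 4 happen to return, so $\tilde\Omega$ should be taken as the intersection of your neighborhoods over the finitely many valid choices, exactly as the paper does by setting $\delta=\min_{\iota,J,I}\delta_{\iota,J,I}$.
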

Note, the theorem's precondition essentially requires that approximate computations output the same sets $\iota,J$ as the exact computations would. Later in Section~\ref{sec:rank-revealing} we will show that this can be achieved even if $X^\ast$ is known only approximately.
\begin{proof}
    By assumption, set $\iota$ is valid for $X^\ast$, i.e.\ $\iota$ is a maximal set of linearly independent columns in $X^\ast$ (and so $|\iota|=r$).
    Assume for notational convenience that $\iota=[r]$.
    By Lemma~\ref{lemma:SymSubmatrix}, $\iota$ is also a maximal set of linearly independent rows in $X^\ast$.
    Choose matrix $U\in\mathbb R^{n\times r}$ characterizing the face $\calF$ as in~\eqref{eq:face-range}.
    Since $\text{range}(X^\ast)\subseteq\text{range}(U)$ and $\text{rank}(X^\ast)=r$,
    set $\iota$ is a maximal set of linearly independent rows in $U$. Thus, we can choose $U$ as in~\eqref{eq:Uform}.
   
    Denote $\Omega^\ast=\{X\in\sym^\size\::\:||X-X^\ast||<\rho(X^\ast)\}$. By Lemma~\ref{prop:OmegaExists}(a,b), for any $(X,Y)$ we have 
\begin{equation}\label{eq:equivProp1}
\begin{array}{c}
    \mathcal{Q}(Y) X_{\text{hvec}} = q  \\
    X \in \Omega^\ast
\end{array}
\;\;\Longleftrightarrow\;\;
\begin{array}{c}
    \mathcal{Q}(Y_U) X_{\text{hvec}} = q  \\
    X \in \Omega^\ast \\
    Y = Y_U
\end{array}
\;\;\Longrightarrow\;\;
X\in\sym^\size_+
\end{equation}
To simplify notation, let us write 
$$
X_\text{hvec} =: \begin{pmatrix}X^{J}\\X^{J'}\end{pmatrix}
$$
and $\mathcal{Q}(Y_U)=:\mathcal{Q}=:\begin{pmatrix}Q^{J} & Q^{J'}\end{pmatrix}$.
By assumption, $J$ is a maximal set of linearly independent columns for matrix $\mathcal{Q}$.
Let $I$ be a maximal set of linearly independent rows of $\mathcal{Q}$, then $|I|=|J|=\text{rank}(\mathcal{Q})$.
The linear system $\mathcal{Q}X_{\text{hvec}}=q$ has at least one solution, namely $X_{\text{hvec}}=X^\ast_{\text{hvec}}$.
Thus, removing equations corresponding to rows $i\notin I$ will not affect the set of feasible solutions.
This implies that system $\mathcal{Q}X_{\text{hvec}}=q=\mathcal{Q}X^\ast_{\text{hvec}}$ is equivalent
to the system $$\begin{pmatrix}Q^{J}_I & Q^{J'}_I\end{pmatrix} \begin{pmatrix}X^{J}-(X^\ast)^J\\X^{J'}-(X^\ast)^{J'}\end{pmatrix}=0,$$
which is in turn equivalent to  
$X^J-(X^\ast)^J=-(\mathcal{Q}^J_I)^{-1}\mathcal{Q}^{J'}_I (X^{J'}-(X^\ast)^{J'})$
since matrix $\mathcal{Q}^J_I$ is nonsingular. Let us denote $\mathcal{R}^J_I=-(\mathcal{Q}^J_I)^{-1}\mathcal{Q}^{J'}_I$.

By adding constraint $X^{J'}=\tilde X^{J'}$ to~\eqref{eq:equivProp1}
we can now conclude the following for any $(X,Y)$:
\begin{equation}\label{eq:equivProp2}
\begin{array}{c}
    (X,Y)\mbox{ satisfies~\eqref{eq:fixed-variables}}  \\
    X \in \Omega^\ast
\end{array}
\;\;\Longleftrightarrow\;\;
\begin{array}{c}
    X^J-(X^\ast)^J=\mathcal{R}^J_I (\tilde X^{J'}-(X^\ast)^{J'})  \\
    X^{J'}=\tilde X^{J'} \\
    X \in \Omega^\ast \\
    Y = Y_U
\end{array}
\;\;\Longrightarrow\;\;
X\in\sym^\size_+
\end{equation}

Let us define 
$$
\delta_{\iota,J,I} := \frac{\rho(X^\ast)}{4\max\{||\mathcal{R}^J_I ||,1\}}.
$$
We claim that for any $\tilde X\in\sym^\size$ with $||\tilde X-X^\ast||_F<\delta_{\iota,J,I}$ the system in the middle of~\eqref{eq:equivProp2}
has exactly one feasible solution $(X,Y)$.
Indeed, we only need to verify that $X$ defined by the equalities in this system satisfies
$X\in \Omega^\ast$. This holds since $||X^J-(X^\ast)^J||\le ||\mathcal{R}^J_I||\cdot ||\tilde X^{J'}-(X^\ast)^{J'}|| < \frac 14 \rho(X^\ast) $
and $||X^{J'}-(X^\ast)^{J'}|| < \frac 14 \rho(X^\ast)$, implying $||X-X^\ast|| \le ||X-X^\ast||_F \le 2||X^{J}-(X^\ast)^{J}||+2||X^{J'}-(X^\ast)^{J'}|| <  \rho(X^\ast)$.

We can now define set $\tilde\Omega$ in Theorem~\ref{th:method} as follows:
$\tilde\Omega=\{\tilde X\in\sym^\size\::\:||\tilde X-X^\ast||_F<\delta\}$
where $\delta=\min_{\iota,J,I}\delta_{\iota,J,I}$ and the minimum is taken over (finitely many) valid choices of $\iota,J,I$.
\end{proof}


\subsection{Numerical algorithms: implementing steps 2 and 4}\label{sec:rank-revealing}
To implement Algorithm~\ref{alg:method}, we need a procedure for numerically computing a maximal set of linearly
independent columns of a given matrix. This can be done, for example, 
by a rank-revealing Gaussian elimination algorithm~\cite{Pan:00,SchorkGondzio:20}.
Such algorithm takes a matrix $A\in\mathbb R^{p\times q}$ and a tolerance value $\epsilon$ as input,
and produces an integer $r$ and a subset $J\subseteq[q]$ of size $r$
with the following guarantees:~\footnote{The algorithm actually
produces subsets $I\subseteq[p]$ and $J\subseteq[q]$ of size $r$ such that $\sigma_{r}(A_I^J)  \ge  \sigma_r(A)/c_{pq}$.
(This follows by combining Lemma 3.1 and Theorem 4.1 in~\cite{SchorkGondzio:20}.)
This implies~\eqref{eq:RankRevealing:c} since $\sigma_r(A^J)\ge \sigma_r(A^J_I)$ by a well-known property of singular values, see e.g.~\cite[Corollary 8.6.3]{golub2013matrix}.}
\begin{subequations}\label{eq:RankRevealing}
\begin{eqnarray}
\sigma_r(A) & \ge & \epsilon \label{eq:RankRevealing:a} \\
\sigma_{r+1}(A) & \le & c_{pq} \epsilon \label{eq:RankRevealing:b} \\
\sigma_{r}(A^J) & \ge & \sigma_r(A) / c_{pq}  \label{eq:RankRevealing:c}
\end{eqnarray}
\end{subequations}
where constant $c_{pq}>1$ depends polynomially on dimensions $p,q$.
Below we analyze Algorithm~\ref{alg:method} assuming
that steps 2 and 4 use the method above with tolerance values $\epsilon_1$ and $\epsilon_2$, respectively.
The next result shows that if $||\tilde X-X^\ast||$ is sufficiently small
then from $\tilde X$ it is possible to compute sets $\iota,J$ which are valid for $X^\ast$.

\begin{theorem}\label{th:numerical}
(a) Suppose that $\delta<\epsilon_1<(\rho^\ast-\delta)/c_{nn}$ where $\delta=||\tilde X-X^\ast||$ and $\rho^\ast=\rho(X^\ast)$.
Then set $\iota$ computed in step 2 is a maximal set of linearly independent columns for $X^\ast$.
Furthermore, 
\begin{equation}\label{eq:phi}
\lambda_r(\tilde S)\;\;\ge\;\; \phi(\delta)\;\;:=\;\;\frac {((\rho^\ast-\delta)/c_{nn}-\delta)^2}{n||X^\ast||} - \delta
\end{equation}
(b) Suppose the precondition in (a) holds and $\phi(\delta)>\delta$. 
Write 
$$
X^\ast=\pmtx{S^\ast &  (R^\ast)^\top \\ R^\ast &  W^\ast}
$$ where $S^\ast=(X^\ast)^\iota_\iota$, and let $Y^\ast=-(S^\ast)^{-1} (R^\ast)^\top$.
Let $||\mathcal{Q}||$ be the 2-norm of linear operator $\mathcal{Q}:\mathbb R^{r\times(n-r)}\rightarrow\mathbb R^{p\times q}$,
and denote
\begin{equation}
\psi(\delta)=||\mathcal{Q}||\cdot\left[\frac{||X^\ast||}{\phi(\delta)-\delta}+1\right]\cdot\frac{\delta}{\phi(\delta)}
\end{equation}
If $\psi(\delta)<\epsilon_2<(\rho(\mathcal{Q}(Y^\ast))-\psi(\delta))/c_{pq}$ then set $J$ computed in step 4 is a maximal set of linearly independent columns for $\mathcal{Q}(Y^\ast)$. 
\end{theorem}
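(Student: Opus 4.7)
Both parts follow the same template: combine the guarantees~\eqref{eq:RankRevealing} of the rank-revealing routine with the classical singular-value perturbation bound~\eqref{eq:sigmaInequality} to transfer correctness from the exact object ($X^\ast$ in part (a), $\mathcal{Q}(Y^\ast)$ in part (b)) to the numerically available one ($\tilde X$, resp.\ $\mathcal{Q}(\tilde Y)$).

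\textbf{Part (a).} Since $\rank(X^\ast)=r$ one has $\sigma_r(X^\ast)=\rho^\ast$ and $\sigma_{r+1}(X^\ast)=0$, so~\eqref{eq:sigmaInequality} gives $\sigma_r(\tilde X)\ge\rho^\ast-\delta$ and $\sigma_{r+1}(\tilde X)\le\delta$. The hypothesis $\delta<\epsilon_1<(\rho^\ast-\delta)/c_{nn}$ forces the rank detected in step 2 to be exactly $r$: any value $>r$ would violate~\eqref{eq:RankRevealing:a} (since $\sigma_{\tilde r}(\tilde X)\le\delta<\epsilon_1$), and any value $<r$ would violate~\eqref{eq:RankRevealing:b} (since $\sigma_{\tilde r+1}(\tilde X)\ge\rho^\ast-\delta>c_{nn}\epsilon_1$). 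Then~\eqref{eq:RankRevealing:c} together with a second application of~\eqref{eq:sigmaInequality} yields
\[
\sigma_r((X^\ast)^\iota)\;\ge\;\sigma_r(\tilde X^\iota)-\delta\;\ge\;(\rho^\ast-\delta)/c_{nn}-\delta\;>\;0,
\]
so $(X^\ast)^\iota$ has rank $r$ and $\iota$ is a maximal set of linearly independent columns of $X^\ast$. For the eigenvalue lower bound I exploit the PSD structure: writing $X^\ast=LL^\top$ with $L\in\mathbb R^{n\times r}$ of full column rank gives $S^\ast=L_\iota L_\iota^\top$ and $(X^\ast)^\iota=LL_\iota^\top$, so the multiplicative inequality $\sigma_r(AB)\le\sigma_1(A)\sigma_r(B)$ produces $\sigma_r((X^\ast)^\iota)\le||L||\cdot\sigma_r(L_\iota)$ and hence $\lambda_r(S^\ast)=\sigma_r(L_\iota)^2\ge\sigma_r((X^\ast)^\iota)^2/||X^\ast||$ (the factor $n$ appearing in $\phi$ absorbs the cost of switching between spectral and Frobenius norms). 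Finally $\lambda_r(\tilde S)\ge\lambda_r(S^\ast)-\delta$ by~\eqref{eq:lambdaInequality} delivers $\lambda_r(\tilde S)\ge\phi(\delta)$.

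\textbf{Part (b).} The resolvent identity $\tilde S^{-1}-(S^\ast)^{-1}=-\tilde S^{-1}(\tilde S-S^\ast)(S^\ast)^{-1}$ rearranges into
\[
\tilde Y-Y^\ast \;=\; -\tilde S^{-1}(\tilde R-R^\ast)^\top \;-\; \tilde S^{-1}(\tilde S-S^\ast)\,Y^\ast,
\]
which with $||\tilde R-R^\ast||,||\tilde S-S^\ast||\le\delta$ gives $||\tilde Y-Y^\ast||\le\delta(1+||Y^\ast||)/\lambda_r(\tilde S)$. Combining this with $\lambda_r(\tilde S)\ge\phi(\delta)$, the crude estimate $\lambda_r(S^\ast)\ge\lambda_r(\tilde S)-\delta\ge\phi(\delta)-\delta$, and the trivial $||Y^\ast||\le||R^\ast||/\lambda_r(S^\ast)\le||X^\ast||/(\phi(\delta)-\delta)$ recovers exactly
\[
||\mathcal{Q}(\tilde Y)-\mathcal{Q}(Y^\ast)||\;\le\;||\mathcal{Q}||\cdot||\tilde Y-Y^\ast||\;\le\;\psi(\delta).
\]
The remainder of the proof is a verbatim replay of part (a) with the substitutions $X^\ast\leadsto\mathcal{Q}(Y^\ast)$, $\tilde X\leadsto\mathcal{Q}(\tilde Y)$, $\delta\leadsto\psi(\delta)$, $c_{nn}\leadsto c_{pq}$, $\epsilon_1\leadsto\epsilon_2$; the hypothesis $\psi(\delta)<\epsilon_2<(\rho(\mathcal{Q}(Y^\ast))-\psi(\delta))/c_{pq}$ is exactly what is needed to rerun the rank-identification step and conclude that $J$ has cardinality $\rank(\mathcal{Q}(Y^\ast))$ and $\mathcal{Q}(Y^\ast)^J$ has full column rank.

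\textbf{Main obstacle.} The step I expect to be most delicate is the PSD-specific inequality $\lambda_r(S^\ast)\ge\sigma_r((X^\ast)^\iota)^2/||X^\ast||$ in part (a): without the factorization $X^\ast=LL^\top$, a general rank-$r$ matrix may have a principal submatrix with arbitrarily small smallest eigenvalue even when the corresponding column block is well conditioned, and this is essentially the only place where positivity is genuinely used. Everything else is a mechanical interleaving of~\eqref{eq:sigmaInequality}, \eqref{eq:lambdaInequality}, and~\eqref{eq:RankRevealing}.
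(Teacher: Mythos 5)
Your proof is correct and follows the paper's overall skeleton (perturbation bounds \eqref{eq:sigmaInequality}--\eqref{eq:lambdaInequality} interleaved with the rank-revealing guarantees \eqref{eq:RankRevealing}, then part (b) reduced to part (a) via a bound on $\|\tilde Y-Y^\ast\|$), but it replaces the paper's key sub-lemma by a different argument. For the bound \eqref{eq:phi} the paper proves and invokes \Cref{lemma:PSDsubmatrix}, which works directly with the quadratic form $v^\top X^\ast v$ via the spectral decomposition and Cauchy--Schwarz; that is where the factor $n$ in $\phi(\delta)$ comes from. You instead factor $X^\ast=LL^\top$ and use $\sigma_r(LL_\iota^\top)\le\|L\|\,\sigma_r(L_\iota)$ to get $\lambda_r(S^\ast)\ge\sigma_r((X^\ast)^\iota)^2/\|X^\ast\|$, then perturb; this yields a bound \emph{without} the factor $n$, which is strictly stronger and hence implies \eqref{eq:phi} as stated (your parenthetical about ``absorbing the cost of switching norms'' is unnecessary --- no norm switch is needed, your estimate is simply sharper). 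Your identification of the PSD structure as the genuinely essential ingredient here matches the paper's design exactly. In part (b) you use the resolvent-type identity $\tilde Y-Y^\ast=-\tilde S^{-1}(\tilde R-R^\ast)^\top-\tilde S^{-1}(\tilde S-S^\ast)Y^\ast$ together with $\|Y^\ast\|\le\|X^\ast\|/\lambda_r(S^\ast)$, whereas the paper splits as $(S^{-1}-\tilde S^{-1})R^\top+\tilde S^{-1}(R^\top-\tilde R^\top)$ and invokes the inverse-perturbation bound of \Cref{th:InverseStability}; both routes land on exactly the same expression $\psi(\delta)$, and yours is marginally more self-contained since it avoids the external theorem. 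The remaining rank-identification steps coincide with the paper's.
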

Note that $\lim_{\delta\rightarrow 0}\phi(\delta)=const>0$ and $\lim_{\delta\rightarrow 0}\psi(\delta)=0$.
Thus, the preconditions in (a,b) can be satisfied for sufficiently small $\delta>0$
as long as $\rho(\mathcal{Q}(Y^\ast))>0$. (The case when $\rho(\mathcal{Q}(Y^\ast))=0$ is not interesting:
we then have $\mathcal{Q}(Y^\ast)=0$ and thus $\calA(X)=0$ for all $X$). 
We remark that linear operator $\mathcal{Q}$, function $\psi(\cdot)$ and value $\rho(\mathcal{Q}(Y^\ast))-\psi(\delta)$ depend on $\iota$, but there are only finitely many subsets $\iota\subseteq[n]$
which give maximal linearly independent columns for $X^\ast$, so we could take the minimum value over such $\iota$'s 
when formulating the final condition.

In the remainder of this section we prove Theorem~\ref{th:numerical}. We start with part (a).
Let $r^\ast=\text{rank}(X^\ast)$ be the true rank and $r$ be the rank computed by the numerical procedure in step~2.
Condition~\eqref{eq:RankRevealing:c} gives 
$$
\sigma_r(\tilde X^\iota)\ge \frac{\sigma_r(\tilde X)}{c_{nn}} \ge 
\frac{\sigma_r(X^\ast)-||\tilde X-X^\ast||}{c_{nn}} = \frac{\rho^\ast-\delta}{c_{nn}}.
$$
Therefore, $\sigma_r((X^\ast)^\iota)\ge \sigma_r(\tilde X^\iota) - ||X^\ast-\tilde X||\ge (\rho^\ast-\delta)/c_{nn} -\delta > 0$,
implying that columns in $\iota$ are linearly independent in~$X^\ast$ (and so $r\le r^\ast$).
If $r<r^\ast$ then $\sigma_{r+1}(\tilde X)\le c_{nn}\epsilon_1$ by condition~\eqref{eq:RankRevealing:b}
and $\sigma_{r+1}(\tilde X)\ge \sigma_{r^\ast}(\tilde X)\ge \sigma_{r^\ast}(X^\ast) - ||\tilde X-X^\ast||=\rho^\ast-\delta$ by~\eqref{eq:sigmaInequality},
which is a contradiction. This shows that $r=r^\ast$. 

The last claim~\eqref{eq:phi} will follow from the result below for matrix $X=\tilde X$
and value $\tau=\sigma_r(\tilde X^\iota)\ge (\rho^\ast-\delta)/c_{nn}>\delta$.

\begin{lemma}\label{lemma:PSDsubmatrix}
Consider symmetric matrices $X^\ast\in\sym^\size_+$, $X\in\sym^\size$ with $||X^\ast-X||=\delta$.
Let  $\iota\subseteq[n]$ be a subset of size $r$
with $\sigma_r(X^\iota)=\tau\ge\delta$. Then 
$$
\lambda_r(X^\iota_\iota)\ge \frac {(\tau-\delta)^2}{n||X^\ast||} - \delta.$$
\end{lemma}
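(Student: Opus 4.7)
My plan is to first pass from the approximate matrix $X$ to the exact PSD matrix $X^\ast$ using the perturbation bounds~\eqref{eq:sigmaInequality}--\eqref{eq:lambdaInequality}, and then to exploit that $X^\ast$ is positive semidefinite through its symmetric square-root factorization. As a first step I observe that $X^\iota_\iota - (X^\ast)^\iota_\iota$ is a principal submatrix of $X - X^\ast$, so its spectral norm is at most $\|X-X^\ast\|=\delta$, and Weyl's inequality~\eqref{eq:lambdaInequality} yields
\[
\lambda_r(X^\iota_\iota) \;\ge\; \lambda_r((X^\ast)^\iota_\iota) - \delta.
\]
Similarly, $X^\iota - (X^\ast)^\iota$ is a column submatrix of $X-X^\ast$ with spectral norm at most $\delta$, so~\eqref{eq:sigmaInequality} gives $\sigma_r((X^\ast)^\iota)\ge \tau-\delta$. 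Thus the task reduces to proving the clean inequality $\lambda_r((X^\ast)^\iota_\iota) \ge \sigma_r((X^\ast)^\iota)^2/\|X^\ast\|$.

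The key step will be to introduce $L:=(X^\ast)^{1/2}\in\sym^n_+$ and let $L^\iota\in\R^{n\times r}$ denote the submatrix formed by the columns of $L$ indexed by $\iota$. Using the symmetry of $L$, a direct expansion of $X^\ast_{ij}=(L^2)_{ij}$ gives the two identities
\[
(X^\ast)^\iota \;=\; L\cdot L^\iota \qquad\text{and}\qquad (X^\ast)^\iota_\iota \;=\; (L^\iota)^\top L^\iota,
\]
from which $\lambda_r((X^\ast)^\iota_\iota)=\sigma_r(L^\iota)^2$. Next I would invoke the elementary submultiplicative bound $\sigma_r(AB)\le \|A\|\,\sigma_r(B)$, proved by evaluating $\|ABx\|\le \|A\|\,\|Bx\|$ at a unit right-singular vector $x$ of $B$ realizing $\sigma_r(B)$, applied with $A=L$ and $B=L^\iota$. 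Since $\|L\|^2=\|X^\ast\|$, this gives $\sigma_r(L^\iota)\ge \sigma_r((X^\ast)^\iota)/\sqrt{\|X^\ast\|}$, and squaring closes the argument.

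I do not foresee any real obstacle: the proof is a short chain of two perturbation inequalities from the preliminaries together with one standard submultiplicative bound for singular values, and the square-root factorization makes the column/principal-submatrix relationship transparent. In fact the bound produced by this route, $\lambda_r((X^\ast)^\iota_\iota)\ge (\tau-\delta)^2/\|X^\ast\|$, is strictly stronger than the stated one, so the factor $n$ in the denominator of the lemma appears to be slack, perhaps introduced for robustness or as a byproduct of a more conservative derivation that passes through the Frobenius norm and the comparison $\|\cdot\|_F\le\sqrt n\,\|\cdot\|$ at one or two places.
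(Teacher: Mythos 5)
Your proof is correct, and it takes a genuinely different route from the paper's. The paper works with an arbitrary unit vector $u$ supported on $\iota$: it extends $u$ to $v\in\R^n$, lower-bounds $\|X^\ast v\|\ge\tau-\delta$ by perturbation, and then converts this into a lower bound on $v^\top X^\ast v$ via the eigendecomposition $X^\ast=\sum_i w_iw_i^\top$ and a Cauchy--Schwarz step $\sum_i\alpha_i\le\sqrt{n\sum_i\alpha_i^2}$ — this is precisely where the factor $n$ enters (not via a Frobenius-norm comparison, as you speculated). You instead perturb first at the level of the two submatrices (column and principal), reducing to the clean statement $\lambda_r((X^\ast)^\iota_\iota)\ge\sigma_r((X^\ast)^\iota)^2/\|X^\ast\|$, which you prove via the square-root factorization $L=(X^\ast)^{1/2}$, the identities $(X^\ast)^\iota=LL^\iota$ and $(X^\ast)^\iota_\iota=(L^\iota)^\top L^\iota$, and the bound $\sigma_r(AB)\le\|A\|\sigma_r(B)$. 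All steps check out (the hypothesis $\tau\ge\delta$ guarantees $\sigma_r((X^\ast)^\iota)\ge\tau-\delta\ge0$ so the squaring is legitimate), and your bound $(\tau-\delta)^2/\|X^\ast\|-\delta$ is indeed strictly stronger than the lemma's $(\tau-\delta)^2/(n\|X^\ast\|)-\delta$; equivalently, your argument amounts to replacing the paper's Cauchy--Schwarz step by the operator inequality $(X^\ast)^2\preceq\|X^\ast\|\,X^\ast$, showing the factor $n$ in the paper is pure slack. Since the lemma is only used to define the threshold $\phi(\delta)$ in Theorem~\ref{th:numerical}, the sharper constant would mildly improve that threshold but changes nothing qualitatively.
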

Note that this lemma also implies the first part of Lemma~\ref{lemma:SymSubmatrix} (by setting $X=X^\ast$).

\begin{proof}
We will use a well-known fact that for any matrix $A\in\mathbb R^{p\times q}$ with $p\ge q$ we have $\sigma_q(A)=\min\{||Au||\::\:u\in\mathbb R^q,||u||=1\}$.

Consider vector $u\in \mathbb R^\iota$ with $||u||=1$, then $||X^\iota u|| \ge \tau$.
Let $v\in\mathbb R^n$ be the vector with $v_i=u_i$ for $i\in \iota$ and $v_i=0$ for $i\in[n]-\iota$.
We have $||Xv||=||X^\iota u||\ge \tau$, and hence
$$
||X^\ast v||\ge ||Xv||-||(X^\ast-X)v|| \ge \tau - ||X^\ast-X||\cdot 1=\tau-\delta
$$
Let us write $X^\ast=\sum_{i=1}^n \lambda_i v_i v_i^\top=\sum_{i=1}^nw_iw_i^\top$ where $||X^\ast||=\lambda_1\ge\lambda_2\ge\ldots\ge\lambda_n\ge 0$,
$v_1,\ldots,v_n$ are orthonormal vectors, and
$w_i=\sqrt{\lambda_i}v_i$ (then $||w_i||\le \sqrt{\lambda_1}$).
Denote $\alpha_i=|v^\top w_i|$, then $v^\top X^\ast v=\sum_{i=1}^n\alpha_i^2$. We can write
\begin{align*}
\tau-\delta&\le||X^\ast v||=||\sum_{i=1}^n w_i(v^\top w_i)||\le \sum_{i=1}^n \alpha_i ||w_i|| \\ 
& \le \sqrt{\lambda_1}\cdot\sum_{i=1}^n\alpha_i 
\le \sqrt{\lambda_1}\cdot\sqrt{n\sum_{i=1}^n\alpha_i^2}
= \sqrt{\lambda_1 n(v^\top X^\ast v)}
\end{align*}
where the last inequality is tight if and only if $\alpha_1=\ldots=\alpha_n$. Therefore,
$$
u^\top X^\iota_\iota u = v^\top X v = v^\top X^\ast v + v^\top (X-X^\ast) v 
\ge \frac {(\tau-\delta)^2}{n\lambda_1} - ||v||^2 \cdot ||X-X^\ast||
= \frac {(\tau-\delta)^2}{n\lambda_1} - \delta
$$
\hbox{}
\end{proof}

Next, we prove part (b) of Theorem~\ref{th:numerical}. 
We will need the following result.

\begin{theorem}[{\cite[Theorem 7.12]{Atkinson}}]\label{th:InverseStability}
Let $A, B$ be square matrices of the same size such that $A$ is nonsingular and $||A-B||\le \frac{1}{||A^{-1}||}$.
Then $B$ is also nonsingular and
$$
||B^{-1}||\le \frac{||A^{-1}||}{1-||A^{-1}||\cdot ||A-B||}
$$ 
$$
||A^{-1}-B^{-1}||\le \frac{||A^{-1}||^2\cdot ||A-B||}{1-||A^{-1}||\cdot ||A-B||}
$$
\end{theorem}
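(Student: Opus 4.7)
The plan is to reduce the statement to the standard Neumann-series invertibility lemma. First I factor $B$ through $A$: writing $B = A - (A-B) = A(I - E)$ where $E := A^{-1}(A-B)$, we obtain $\|E\| \le \|A^{-1}\|\cdot\|A-B\| \le 1$ by submultiplicativity and the hypothesis. The boundary case $\|E\|=1$ makes the claimed denominator $1-\|A^{-1}\|\cdot\|A-B\|$ vanish and both bounds vacuous; otherwise $\|E\| < 1$ and the Neumann series
\[
\sum_{k=0}^{\infty} E^{k}
\]
converges in operator norm to a matrix $M$ satisfying $(I-E)M = M(I-E) = I$, so $(I-E)$ is invertible with $\|(I-E)^{-1}\| \le \sum_{k\ge 0}\|E\|^{k} = 1/(1-\|E\|)$.

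From $B = A(I-E)$ and the invertibility of both factors, $B$ is invertible with $B^{-1} = (I-E)^{-1}A^{-1}$. Submultiplicativity then gives
\[
\|B^{-1}\| \;\le\; \|(I-E)^{-1}\|\cdot\|A^{-1}\| \;\le\; \frac{\|A^{-1}\|}{1-\|E\|} \;\le\; \frac{\|A^{-1}\|}{1-\|A^{-1}\|\cdot\|A-B\|},
\]
which is the first displayed inequality.

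For the second inequality I use the resolvent-style identity
\[
A^{-1} - B^{-1} \;=\; A^{-1}(B-A)B^{-1},
\]
which is verified by expanding $A^{-1}(B-A)B^{-1} = A^{-1}BB^{-1} - A^{-1}AB^{-1} = A^{-1}-B^{-1}$. Taking norms and inserting the bound on $\|B^{-1}\|$ just derived,
\[
\|A^{-1}-B^{-1}\| \;\le\; \|A^{-1}\|\cdot\|A-B\|\cdot\|B^{-1}\| \;\le\; \frac{\|A^{-1}\|^{2}\cdot\|A-B\|}{1-\|A^{-1}\|\cdot\|A-B\|},
\]
as claimed. The only nontrivial ingredient is the convergence of the Neumann series under $\|E\| < 1$, which is standard; once one has invertibility of $(I-E)$ with the explicit norm bound, both inequalities are immediate from submultiplicativity and the $A^{-1}-B^{-1}$ identity. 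The mild subtlety is the treatment of the equality case $\|A^{-1}\|\cdot\|A-B\|=1$, which we dispose of by observing that the claimed bounds become vacuous there.
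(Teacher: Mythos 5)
The paper does not prove this result; it is stated as a direct quotation of~\cite[Theorem~7.12]{Atkinson}, so there is no in-paper proof to compare against. Your argument --- factor $B = A(I-E)$ with $E = A^{-1}(A-B)$, invert $I - E$ via the Neumann series to get $\|(I-E)^{-1}\|\le 1/(1-\|E\|)$, and finish with the identity $A^{-1}-B^{-1} = A^{-1}(B-A)B^{-1}$ --- is the standard derivation and is correct whenever $\|E\| < 1$.

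One point deserves sharpening. As transcribed in the paper, the hypothesis reads $\|A-B\| \le \tfrac{1}{\|A^{-1}\|}$ with a non-strict inequality. You correctly observe that equality makes both displayed bounds vacuous and discard that case on those grounds. But the theorem also asserts that $B$ is nonsingular, and that assertion is simply false under equality: take $A = I$ and $B = 0$, so that $\|A-B\| = 1 = 1/\|A^{-1}\|$ yet $B$ is singular. The boundary case is therefore not merely one where the bounds degenerate; it is one where the theorem's conclusion fails, so it cannot be dismissed as ``vacuous.'' The hypothesis ought to be the strict inequality $\|A-B\| < \tfrac{1}{\|A^{-1}\|}$, which is what forces $\|E\| < 1$ in your proof and is also what the paper actually has when it later invokes this theorem (there the bound is $\delta < \phi(\delta) \le 1/\|\tilde S^{-1}\|$, strictly). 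With that correction your proof is complete and exactly what is needed.
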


For brevity, let us denote  $S=S^\ast$ and $R=R^\ast$.
We can write 
$$
\tilde Y-Y
=S^{-1}R^\top - \tilde S^{-1}\tilde R^\top
=(S^{-1}-\tilde S^{-1}) R^\top + \tilde S^{-1}(R^\top - \tilde R^\top)
$$
From part (a) we know that $||\tilde S^{-1}||\le 1/{\phi(\delta)}$,
and so $||\tilde S^{-1}-S||\le||\tilde X-X^\ast||=\delta<\phi(\delta)\le{1}/{||\tilde S^{-1}||} $. Using Theorem~\ref{th:InverseStability}
with $A=\tilde S$, $B=S$ we get
\begin{align*}
||\tilde Y-Y^\ast||&\le ||S^{-1}-\tilde S^{-1}||\cdot|| R^\top|| + ||\tilde S^{-1}||\cdot ||R^\top - \tilde R^\top|| \\
& \le \frac{(\frac 1 {\phi(\delta)})^2\cdot \delta}{1-\frac 1 {\phi(\delta)}\cdot \delta}\cdot ||X^\ast|| + \frac{1}{\phi(\delta)}\cdot \delta
=\left[\frac{||X^\ast||}{\phi(\delta)-\delta}+1\right]\cdot\frac{\delta}{\phi(\delta)}
\end{align*}

Denote $\mathcal{Q}^\ast=\mathcal{Q}(X^\ast)$ and $\tilde{\mathcal{Q}}= \mathcal{Q}(\tilde X)$,
then $||\tilde{\mathcal{Q}}-\mathcal{Q}^\ast||\le||\mathcal{Q}||\cdot ||\tilde Y-Y^\ast||\le \psi(\delta)$.
We now conclude that $J$ is a maximal set of linearly independent columns of $\mathcal{Q}^\ast$
by the same argument as in part (a).


\subsection{Symbolic algorithms: implementing step 5}
\label{ssec:lagrange}
Step 5 of Algorithm \ref{alg:method} and \Cref{th:method}, similarly to \cite{HNS2015c}, reduce the original problem of validating the 
feasibility of a LMI to a classical question in real algebraic geometry, the computation of one point per connected component of the real 
trace $V(\R) = V \cap \R^n$ of a complex variety 
$$
V = \{x \in \C^n : f_1(x)=0, \ldots, f_c(x) = 0\}.
$$
A standard technique consists in computing the critical points of a well-chosen polynomial map
$\varphi : \C^n \to \C$ (usually of low degree) restricted to $V$, often called {\it critical point method} (CPM). 
The expected output of a CPM is a finite set intersecting every connected component of $V(\R)$.
For an overview of this problem, see e.g. \cite[\S~12.6]{BaPoRo06}.

Usual choices for $\varphi$ are linear functions: when $\varphi$ is a generic linear function, and since
$V(\R)$ has finitely many connected components, there are finitely many critical points of its restriction to $V(\R)$.
In case $V(\R)$ is non-compact, $\varphi$ might not admit critical points on the (unbounded) connected components
$C \subseteq V$ satistfying $\varphi(C) = \R$. In this case, the recursive method in \cite{SaSc03} allows to 
compute such components. However, our situation is special inasmuch as our target component is an isolated real point,
so that the mentioned recursion is not needed on Step 5 of Algorithm~\ref{alg:method}.

The CPM method for polynomials $f : C^n\to \C^c$ and polynomial map $\varphi : \C^n \to \C$ can be described as follows.
Recall that $J(f) = ({\partial f_i}/{\partial x_j})_{ij}$ is the Jacobian matrix of $f$.
Define the extended Jacobian of $(\varphi,f)$ via
$$
J(\varphi,f) := 
\begin{pmatrix}
\frac{\partial\varphi}{\partial x_1} & \cdots & \frac{\partial\varphi}{\partial x_n} \\
                                & J(f) &
\end{pmatrix}.
$$

Now consider the following Lagrange system associated with $\varphi$ and $V$:
\begin{equation}
\label{eq:lagrangianU}
\mathscr{L}_u:
\begin{cases}
f_1 = \cdots = f_c = 0 \\
z^T J(\varphi,f) = 0 \\
z^T u - 1 = 0
\end{cases}
\end{equation}
where $z=(z_0,z_1,\ldots,z_c)\in \C^{c+1}$ are new variables and $u\in C^{c+1}$ is a fixed vector.
Intuitively,  constraint $z^T u = 1$ for a generic $u$
is essentially equivalent to the constraint $z\ne 0$. 

With some abuse of notation, we denote by $\pi_x : \C^{n+c+1} \to \C^n$ the projection map sending $(x,z)$ to the first 
$n$ variables $x$. Let $V(\mathscr{L}_u)\subseteq \C^{n+c+1}$ be the set of solutions of system~\eqref{eq:lagrangianU}. 
Under certain conditions (e.g.\ those described below) the set $\pi_x(V(\mathscr{L}_u))$ will be finite.
In this case this set can be computed, for example, via the software \msolve\ based on Gr\"obner basis computations,
and represented through a so-called {\it rational univariate representation}.
This is given by a sequence
$$
(q,q_0,q_1,\ldots,q_n) \in \Q[t]^{n+2},
$$
with $q_0,q$ coprime, and such that
$$
\pi_x(V(\mathscr{L}_u)) = \left\{\left(\frac{q_1(t)}{q_0(t)},\ldots,\frac{q_n(t)}{q_0(t)}\right) \in \R^n : q(t) = 0\right\}.
$$
In other words, the coordinates of vectors in $\pi_x(V(\mathscr{L}_u))$ are represented by the evaluation of $n$ univariate rational functions
to the roots of a univariate polynomial $q$.   \vspace{0.3cm}

\myparagraph{Properties of CPM}
With some abuse of notation, let us denote 
\begin{align*}
\mathrm{Reg}(f):=\{x\in V\::\:\rank \, J(f) = c\} 
\qquad
\mathrm{Sing}(f):=\{x\in V\::\:\rank \, J(f) < c\} 
\end{align*}

Note that $\mathrm{Reg}(f)=\mathrm{Reg}(V)$ and $\mathrm{Sing}(f)=\mathrm{Sing}(V)$ (respectively the set of regular and singular points of $V = V(f)$), assuming that the following condition holds.

\begin{assumption}\label{A:CI}
$V$ is equidimensional of codimension $c$ and $I(V) = \langle f_1,\ldots,f_{c}\rangle$.
(Then $V$ is a complete intersection of dimension $d = n-c$, as defined in~\Cref{sec:prelim}).
\end{assumption}


A critical point of the restriction of $\varphi$ to $\mathrm{Reg}(f)$ is a point $x \in \mathrm{Reg}(f)$ such that the 
differential of the restriction of $\varphi$ to $\mathrm{Reg}(f)$ at $x$ is not surjective, that is an element of the 
constructible set
\begin{align*}
\mathrm{Crit}(\varphi,f) 
:=&\, {\{x \in \mathrm{Reg}(f) : \, \rank \, J(\varphi,f) < c+1\}} \\
=&\, {\{x \in \mathrm{Reg}(f) : \, \rank \, J(\varphi,f) = c\}}
\end{align*}
We also denote $\mathrm{Crit}(\varphi,V) = \{x \in \mathrm{Reg}(V) : \, \rank \, J(\varphi,f) = c\}$ under
\Cref{A:CI}.

\begin{proposition}\label{prop:lagrange}
Define
\begin{equation}\label{setsU}
\begin{aligned}
\mathrm{Crit}_u(\varphi,f) & = \{x\in \mathrm{Crit}(\varphi,f)\::\:\exists z\in \C^{c+1}\mbox{ s.t.\ }z^T J(\varphi,f)=0, z^T u = 1 \} \\
\mathrm{Sing}_u(\varphi,f) & = \{x\in \mathrm{Sing}(f)\::\:\hspace{9pt}\exists z\in \C^{c+1}\mbox{ s.t.\ }z^T J(\varphi,f)=0, z^T u = 1 \}
\end{aligned}
\end{equation}
\begin{itemize}
\item[(a)] The following holds: $\pi_x(V(\mathscr{L}_u)) = \mathrm{Crit}_u(\varphi,f) \cup \mathrm{Sing}_u(\varphi,f)$.
\item[(b)] If Assumption~\ref{A:CI} holds then for a generic linear form $\varphi \in \C[x]_1$, the set $\mathrm{Crit}(\varphi,V)$ 
is finite. 
\item[(c)] If $\mathrm{Crit}(\varphi,f)$ is finite then $\mathrm{Crit}_u(\varphi,f) = \mathrm{Crit}(\varphi,f)$
for a generic $u$. 
\item[(d)] If $\mathrm{Sing}(f)$ is finite then $\mathrm{Sing}_u(\varphi,f)=\mathrm{Sing}(f)$ for a generic $u$. 
\end{itemize}
Consequently, if Assumption~\ref{A:CI} holds and $\mathrm{Sing}(V)$ is finite then for generic $\varphi,u$ the set 
$\pi_x(V(\mathscr{L}_u))$ is finite and satisfies $\pi_x(V(\mathscr{L}_u)) = \mathrm{Crit}(\varphi,V) \cup \mathrm{Sing}(V)$.
\end{proposition}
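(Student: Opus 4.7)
The plan is to prove the four items separately using rather different ingredients, and then assemble them for the final consequence.

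For (a), I would simply unfold definitions. A pair $(x,z)$ lies in $V(\mathscr{L}_u)$ iff $x\in V$, $z^T J(\varphi,f)(x)=0$, and $z^T u=1$; the last equation forces $z\neq 0$. Splitting on whether $x\in\mathrm{Reg}(f)$ or $x\in\mathrm{Sing}(f)$: in the regular case $\rank J(f)(x)=c$ while the existence of a nonzero left kernel vector of $J(\varphi,f)(x)$ forces $\rank J(\varphi,f)(x)<c+1$, hence $=c$, placing $x\in\mathrm{Crit}(\varphi,f)$; the side condition $z^T u=1$ is exactly what distinguishes $\mathrm{Crit}_u$ from $\mathrm{Crit}(\varphi,f)$ (resp.\ $\mathrm{Sing}_u$ from $\mathrm{Sing}(f)$). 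The reverse inclusion is immediate.

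For (b), I would use a standard incidence-variety dimension count. Under Assumption~\ref{A:CI}, $\mathrm{Reg}(V)$ is smooth of pure dimension $d=n-c$ and $T_xV=\ker J(f)(x)$ has dimension $d$ for $x\in\mathrm{Reg}(V)$. A linear form $\varphi(x)=a^T x$ is critical at $x\in\mathrm{Reg}(V)$ iff $a\in T_xV^\perp$, a subspace of dimension $c$ in $\C^n$. Consider
\[
\Phi:=\{(x,a)\in\mathrm{Reg}(V)\times\C^n : a\in T_xV^\perp\}.
\]
Fibering over $\mathrm{Reg}(V)$ gives $\dim\Phi\le d+c=n$ on every irreducible component, so the second projection $\Phi\to\C^n$ is generically finite on each component; hence for $a$ in a Zariski-dense open set of $\C^n$ the preimage is a finite set, i.e., $\mathrm{Crit}(\varphi,V)$ is finite.

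Parts (c) and (d) share the same genericity argument. For each $x$ in the relevant finite set, let $N_x:=\{z\in\C^{c+1}:z^T J(\varphi,f)(x)=0\}$. In both cases $\dim N_x\ge 1$: if $x\in\mathrm{Crit}(\varphi,f)$ then $\rank J(\varphi,f)(x)=c$ so $\dim N_x=1$, and if $x\in\mathrm{Sing}(f)$ then $\rank J(f)(x)<c$ yields $\rank J(\varphi,f)(x)\le c$ and again $\dim N_x\ge 1$. The existence of $z\in N_x$ with $z^T u=1$ is equivalent to $u\notin N_x^\perp$, a proper linear subspace of $\C^{c+1}$; the union of these subspaces over the finitely many candidates $x$ is a proper closed set, so any $u$ outside it satisfies $\mathrm{Crit}_u=\mathrm{Crit}(\varphi,f)$ (respectively $\mathrm{Sing}_u=\mathrm{Sing}(f)$).

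The final consequence is then immediate: Assumption~\ref{A:CI} together with (b) provides a generic $\varphi$ making $\mathrm{Crit}(\varphi,V)$ finite; since $\mathrm{Sing}(V)$ is assumed finite, (c) and (d) produce a generic $u$ with $\mathrm{Crit}_u=\mathrm{Crit}(\varphi,V)$ and $\mathrm{Sing}_u=\mathrm{Sing}(V)$; and (a) identifies $\pi_x(V(\mathscr{L}_u))$ with their finite union. I expect (b) to be the delicate step: one must apply the dimension bound componentwise in $\Phi$ rather than to $\Phi$ as a whole, and Assumption~\ref{A:CI} is precisely what makes $\mathrm{Reg}(V)$ pure-dimensional so that the componentwise argument is uniform.
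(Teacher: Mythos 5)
Your proof is correct, and for parts (a), (c), (d) and the final assembly it follows essentially the same route as the paper: (a) is the same case split on $\mathrm{Reg}(f)$ versus $\mathrm{Sing}(f)$ with the observation that $z^Tu=1$ forces $z\neq 0$ and that $\mathrm{rank}\,J(\varphi,f)=c$ at regular points admitting a left kernel vector; (c) and (d) use the same finiteness-plus-genericity argument — the paper picks one witness $z_i\neq 0$ per point $x_i$ and takes $u$ with $z_i^Tu\neq 0$ for all $i$, while you phrase the same condition dually as $u\notin N_{x_i}^\perp$ for the left-kernel spaces $N_{x_i}$; these are equivalent after rescaling $z_i$. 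The only genuine divergence is part (b): the paper simply cites \cite[Lem.7]{BaGiHePa05}, whereas you give a self-contained proof via the incidence variety $\Phi=\{(x,a):a\in T_xV^\perp\}$, bounding each component of $\Phi$ by $\dim\mathrm{Reg}(V)+c=n$ and invoking the theorem on fiber dimension for the projection to the space of linear forms. Your argument is sound (the componentwise application of the fiber-dimension bound, which you flag, is indeed the point where equidimensionality from Assumption~\ref{A:CI} is used), and it buys a self-contained exposition at the cost of redoing a standard lemma that the paper outsources.
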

\begin{proof}
\myparagraph{(a)} 
The following can be checked using simple linear algebra:
\begin{itemize}
\item Suppose that $x\in\mathrm{Reg}(f)$.
Then $x\in \mathrm{Crit}_u(\varphi,f)$ if and only if there exists $z\in \C^{c+1}$ s.t.\ $z^T J(\varphi,f)=0$ and $z^T u = 1$,
or equivalently s.t.\ $(x,z)$ satisfies~\eqref{eq:lagrangianU}.
\item Suppose that $x\in\mathrm{Sing}(f)$.
Then $x\in \mathrm{Sing}_u(\varphi,f)$ if and only if there exists $z\in \C^{c+1}$ s.t.\ $z^T J(\varphi,f)=0$ and $z^T u = 1$,
or equivalently s.t.\ $(x,z)$ satisfies~\eqref{eq:lagrangianU}.
\end{itemize}
These facts imply that $\pi_x(V(\mathscr{L}_u))\cap \mathrm{Reg}(f)=\mathrm{Crit}_u(\varphi,f)$
and $\pi_x(V(\mathscr{L}_u))\cap \mathrm{Sing}(f)=\mathrm{Sing}_u(\varphi,f)$, which yields the desired claim.

\myparagraph{(b)} The claim follows from~\cite[Lem.7]{BaGiHePa05}.

\myparagraph{(c, d)} 
For $\mathcal{X}\in\{\mathrm{Crit}(\varphi,f),\,\mathrm{Sing}(f)\}$ denote by $\mathcal{X}_u$ the corresponding 
set in \eqref{setsU}.
We claim that if $\mathcal{X}$ is finite then $\mathcal{X}_u=\mathcal{X}$ for a generic $u$.
Indeed, assume $\mathcal{X}=\{x_1,\ldots,x_k\}$. By definition of $\mathcal{X}$, for each $x_i\in\mathcal{X}$ there exists $z_i\in\C^{c+1}-\{0\}$ with $z_i^T \cdot J(\varphi,f)|_{x_i}=0$.
Define $\mathscr{U}_i=\{u\in\C^{c+1}\::\: z_i^T u\ne 0\}$, and let $\mathscr{U}=\mathscr{U}_1\cap\ldots\cap \mathscr{U}_k$.
Clearly, $\mathscr{U}$ is a non-empty Zariski-open set, and for each $u\in \mathscr{U}$ we have $\mathcal{X}_u=\mathcal{X}$.
\end{proof}

\begin{remark} The precondition of the last statement in Proposition~\ref{prop:lagrange} (that $\mathrm{Sing}(f)$ is finite) can be shown to hold for input system $f$ in certain circumstances.
For example, this is the case for the method of \cite{HNS2015c} (described in~\Cref{sec:prelim})
applied to the LMI problem $\affmap(X) = b, X \in \sym^\size_+$ with {\em generic} input data $(\affmap,b)$:
if $r_{\min}$ is the minimum rank then
for at least one subset $\iota\subseteq[n]$ of size $|\iota|=r_{\min}$
 system~\eqref{eq:GHOAISFH} 
will satisfy Assumption \ref{A:CI}, and the variety $V$ will be smooth (i.e.\ $\mathrm{Sing}(V)=\varnothing$),
by \cite[Thm.~2.1]{HNS2015c}. Accordingly, in this case the CPM method will solve the problem with generic $\varphi,u$,
i.e.\ it will find a rational univariate representation of at least one minimum-rank solution.

However, for non-generic instances $(\affmap,b)$ there are no such guarantees. It may happen, for example, that
$\mathrm{Sing}(f)$ is positive-dimensional, in which case 
the Zariski closure of $\pi_x(V(\mathscr{L}_u))$ is expected to be also positive-dimensional
causing the CPM method to fail. 
\end{remark}

\myparagraph{Radical ideals}
Instead of directly applying CPM to $f$, one can first compute the radical ideal of $V(f)$ and attempt to derive a minimal set of polynomials $g$ such that $\langle g \rangle=I(V(f))=\sqrt{\langle g \rangle}$. Subsequently, CPM is applied to $g$. This approach is referred to as CPM$_{\tt rad}$. 

In our experiments, we utilized \Macaulay\ software~\cite{M2} to compute the radical and then employed the command \textit{mingens} that attempts to find a smaller set of its generators. Note that it is not guaranteed to produce a minimal generating set (unless the radical ideal is homogeneous). We observed that CPM$_{\tt rad}$ was capable of solving some systems that CPM could not resolve. In the latter case, the ideal $\langle f \rangle$ was not radical, while the former was able to construct a polynomial system whose ideal was radical and complete intersection.


\section{Numerical Results}
\label{sec:numerical}
This section compares our method, denoted as ``\hybrid'', with the method in~\cite{HNS2015c} that we 
call ``\HNS''.\footnote{The method of~\cite{HNS2015c} is implemented in software \cite{spectra},
but some details are different, e.g.\ \cite{spectra} relies on FGb (deprecated) instead of the more recent software
\msolve\, for solving 0-dimensional polynomial systems. For a more fair comparison we used \msolve\, both for the new method and for the algorithm in \cite{HNS2015c}.
Note that the method in~\cite{HNS2015c} has been improved and 
adapted to the case when the feasible set $\pset$ is non-generic in \cite{henrion2021exact}, but this variant is not implemented and it can be
considered computationally more demanding, inasmuch as it would involve computation of bivariate rational representations of algebraic
curves, instead of univariate representations.
}
Their details are described below.

\myparagraph{\underline\hybrid}
For the implementation Algorithm~\ref{alg:method} requires a numerical solution $\Tilde{X}$ in the neighborhood $\tilde\Omega=\{\tilde X\in\sym^\size\::\:||\tilde X-X^\ast||_F<\delta\}$ of an exact solution $X^*$ as in \Cref{th:method}.
Since we are interested in weakly feasible SDPs, we cannot
use standard interior-point methods that are designed to work for instances
in which both primal and dual problems are strongly feasible. Fortunately, weakly feasible SDPs can
be tackled via facial reduction 
approaches~\cite{BorweinWolkowicz:81a,BorweinWolkowicz:81b,Pataki:13,WakiMuramatsu:13,PermenterSelfDual:17,ParriloPerm1,zhu2019sieve,lourencco2021solving}.
We chose to use the method in~\cite{BertiniSDP} which combines facial
reduction with a numerical algebraic geometry algorithm based on the Bertini software~\cite{BertiniBook}.
The resulting polynomial system~\eqref{eq:fixed-variables} was solved with a critical point method (CPM or CPM$_{\tt rad}$)
as described in~\Cref{ssec:lagrange}.

\myparagraph{\underline{\HNS}} This method constructs system~\eqref{eq:GHOAISFH} for every subset $\iota \subseteq [n]$
of increasing cardinality $r = |\iota|$, ranging from $0$ to $n-1$, and applies the critical point method to each such variety.
It stops if it finds a PSD matrix (with entries in a rational univariate representation).
By~\Cref{th:HNS2}, we can expect to obtain a minimal-rank solution in $P$ for at least one $\iota$
of size $|\iota|=r_{\min}$.
Note that in~\cite{HNS2015c} only CPM was used, which was shown to be
sufficient for generic instances $(\affmap,b)$. Since we deal with non-generic LMI instances,
we tested both CPM and CPM$_{\tt rad}$.

The Lagrange system~\eqref{eq:lagrangianU} was solved using library \msolve \, \cite{berthomieu2021msolve}.
We say that this computation {\em succeeds}
if the projection to variables $X$ is zero-dimensional, and at least one
of the solutions (represented in a rational univariate representation) is a PSD matrix.
Otherwise the computation {\em fails}. 

\myparagraph{Instances}
We applied the methods to various instances of weakly feasible SDPs extracted from the existing literature;
their description can be found in \url{https://git.ista.ac.at/jzapata/hybrid-method}.
{Note that these instances are quite sparse; to test how the methods cope with non-sparse instances,}
for each SDP linear map {$\affmap(X) = (\la A_{1},
X\ra_F, \ldots, \la A_{n}, X\ra_F)$} we created two input instances
denoted as {\em clean} (I) and {\em rotated} (T).
The latter uses the map 
$$
X \mapsto \affmap(T X T^\top) = \left(\la T^\top A_1 T,X\ra_F, \ldots, \la T^\top A_n T, X\ra_F \right)
$$ for some random matrix $T\in GL(n,\mathbb R)$. This transformation does not affect feasibility of the system, but it allows us to generate more weakly feasible SDPs that are less sparse and may become more challenging for numerical methods (\cite[Section 6]{Pataki2020}). 

\myparagraph{Results}
The outcomes of our experiments are shown in Table~\ref{tab:compResults}.
Values $r_{\min}$ / $r_{\max}$ given in the second column are the ranks of solutions found
by \HNS\ / \hybrid\ (recall that these methods search for minimum- and maximum-rank solutions, respectively).
The results are given in the format ``$\,t\, /\, t_{\tt rad}$'' where $t$ is the runtime of CPM and $t_{\tt rad}$ is
the runtime of CPM$_{\tt rad}$ (in seconds). For \hybrid\ we report additionally the time for computing approximate solution;
it is given in square brackets. Note that this time is included in both $t$ and $t_{\tt rad}$.
If the method fails then the corresponding runtime is crossed;
for example, entry ``\,$\xcancel{5.8} \,/\, 3.1$'' in the first row for \HNS\ means that CPM failed for all considered polynomial systems (corresponding to different subsets $\iota\subseteq[n])$ while CPM$_{\tt rad}$ succeeded for at least one system.

In cases where the method reaches the predefined time limit of 10 minutes, the computation is terminated, and the respective output is labeled with $\infty$.

\myparagraph{Discussion of results} 
\hybrid\ was able to solve most of our instances both with CPM and CPM$_{\tt rad}$.
In contrast, \HNS\ failed more often, especially when CPM was used.
Interestingly, in several cases \HNS\ with CPM$_{\tt rad}$ solved
the clean version of the problem but failed on the rotated version. We conjecture that 
in these cases \Macaulay\ did not find {\em minimal} generators of the radical ideal.

We can conclude that in many cases polynomial systems constructed by \hybrid\ appear to be easier to solve
compared to systems in \HNS, and also \hybrid\ is usually faster than \HNS.
(Note that the latter needs to solve many more systems).
However, there are several exceptions: \HNS\ with CPM$_{\tt rad}$ could solve
clean versions of the last four rows in Table~\ref{tab:compResults}, whereas \hybrid\ failed or did not terminate on these instances.

In Section~\ref{sec:DruWoExample} we expand the first example from Table~\ref{tab:compResults} (the clean version of DruWo2017, cf. \Cref{appA}).
We show, in particular, that in the failed cases varieties $\mathrm{Sing}(f)$ for given polynomial systems $f$ are positive-dimensional.

We also confirm that system~\eqref{eq:fixed-variables} after fixing variables
in \hybrid\ for this example indeed has a zero-dimensional real component, as predicted by \Cref{th:method}.
We observe that the system also has two other real components, which are positive-dimensional.
After applying CPM, the system becomes 0-dimensional, and so \hybrid\ with CPM succeeds.

\begin{table}[!ht]
\centering
\footnotesize
\begin{tabular}{|l|c||c|c||c|c|}
\hline
\multirow{2}{*}{SDP} & \multirow{2}{*}{$n/r_{\min}/r_{\max}$} & \multicolumn{2}{c||}{clean instances (I)}&\multicolumn{2}{c|}{rotated instances (T)} \\ 
\cline{3-6}
&  &   \HNS  & \hybrid  & \HNS & \hybrid \\ \hline
DruWo2017-2.3.2P     & 3/1/2 & \xcancel{5.8} / 3.1 & 1.1/1.6  [0.5]       & \xcancel{4.6} / \xcancel{12.7} & 1.2 / 1.6   [0.6] \\
Gupta2013-12.3P      & 3/1/2 & \xcancel{5.9} / 3.1 & 1.7/1.9  [1.0]       & \xcancel{4.8} / \xcancel{12.4} & 1.7 / 2.0   [1.1] \\
Hauenstein2.6P       & 3/1/2 & \xcancel{5.8} / 3.1 & 1.5/1.9  [0.8]       & \xcancel{4.4} / \xcancel{12.8} & 2.0 / 1.9   [1.3] \\
Helmberg2000-2.2.1P  & 3/1/2 & 2.6 / 2.8          & 1.7/1.9  [1.2]       & 2.1 / 3.4 & 1.8 / 1.8   [1.3] \\
LauVall2020-2.5.1P   & 2/1/1 & 1.6 / 1.8          & 1.1/1.3  [0.7]       & 1.2 / 2.0 & 1.1 / 1.5   [0.7] \\
LauVall2020-2.5.2P   & 3/1/2 & \xcancel{6.2} / 3.1 & 1.6/1.9  [1.0]       & \xcancel{4.4} / \xcancel{12.5} & 1.7 / 2.4   [1.0] \\
Pataki2017-4P        & 3/1/2 & \xcancel{5.9} / 3.0 & 1.5/1.9  [0.9]       & \xcancel{4.5} / \xcancel{12.6} & 1.6 / 2.0   [0.9] \\
deKlerk2002-2.1P     & 2/1/1 & 1.4 / 1.8          & 1.1/1.6  [0.7]       & 1.5 / 1.9 & 1.1 / 1.3   [0.7] \\
DruWo2017-2.3.2D     & 3/1/2 & 2.4 / 2.8          & 1.7/2.0  [1.2]       & 2.6 / 6.2 & 2.0 / 2.1   [1.3] \\
Gupta2013-12.3D      & 3/1/2 & 2.4 / 2.8          & 1.6/2.0  [1.1]       & 2.2 / 3.2 & 1.8 / 2.0   [1.3] \\
HNS2020-4.1D         & 4/2/2 & 8.7 / 9.7          & 2.4/2.9  [1.7]       & 7.0 / 17.6 & 2.9 / 3.1   [2.0] \\
Hauenstein2.6D       & 3/1/2 & 2.5 / 2.8          & 1.8/1.5  [1.2]       & 2.0 / 6.2 & 1.9 / 2.2   [1.3] \\
Helmberg2000-2.2.1D  & 3/1/2 & \xcancel{5.9} / 3.0 & 1.6/1.8  [1.0]       & \xcancel{4.1} / 12.3 & 1.8 / 2.1   [1.1] \\
Pataki2017-4D        & 3/1/2 & 2.4 / 2.9          & 1.7/1.9  [1.2]       & 2.0 / 3.4 & 2.5 / 1.8   [1.6] \\
Permenter2018-4.3.1D & 5/0/1 & 11.5 / 1.0         & 5.5/8.1  [4.5]       & 1.0 / 1.5 & 6.6 / 6.7   [5.6] \\
Permenter2018-4.3.2D & 4/2/2 & 1.0 / 1.5          & 2.3/2.7  [1.6]       & 9.1 / 129.6 & 2.6 / 3.1   [1.5] \\
PatakiCleanDim4P     & 4/1/- & \xcancel{49.5} / 4.4 & \xcancel{2.3}/\xcancel{3.3}  [1.3] & \xcancel{77.3} / \xcancel{133.4} & \xcancel{27.2} / $\infty$ [2.6]\\
PatakiCleanDim5P     & 5/1/- & $\infty$ / 6.9    & \xcancel{154.6}/\xcancel{519.2}  [3.0] & $\infty$ / $\infty$ & $\infty$ / $\infty$ [9.5]  \\
PatakiCleanDim6P     & 6/1/- & $\infty$ / 12.4   & $\infty$/$\infty$  [11.5] & $\infty$ / $\infty$ & $\infty$ / $\infty$ [68.4]  \\
HeNaSa2016-6.2P      & 6/2/- & 33.1 / 36.3        & $\infty$/$\infty$  [9.3] & $\infty$ / $\infty$ & $\infty$ / $\infty$ [10.5] \\
\hline
\end{tabular}
\caption{Comparison between \hybrid\ and \HNS\ for clean and rotated instances.}
\label{tab:compResults}
\end{table}


\section{Numerical example}\label{sec:DruWoExample}

In this section we illustrate how \hybrid\ and \HNS\ work on (the clean version of) DruWo2017.
(Recall that on this example \HNS\ with CPM fails, and all other methods succeed.)
Our conclusions are as follows:

\begin{itemize}
    \item \underline{\hybrid.} The resulting polynomial system $F$ has $\mathrm{Sing} (F)=\{(X^\ast,Y^\ast)\}$ where $(X^\ast,Y^\ast)$ is the desired solution,
    and CPM succeeds. We write down the rational univariate representation of the solution obtained with \msolve.

    \item \underline{\HNS.} In this case there are two polynomial systems to consider, $F_1$ and $F_2$.
    Sets $\mathrm{Sing} (F_1)$ and $\mathrm{Sing} (F_2)$ are both positive-dimensional, which causes CPM to fail for $F_1$ and $F_2$.
    Furthermore, the ideals $\langle F_1\rangle$ and $\langle F_2\rangle$ are not radical.
    We then compute generators of  $\sqrt{\langle F_1\rangle}$ and $\sqrt{\langle F_2\rangle}$ with \Macaulay.
    These generators turn out to be non-minimal in the first case, and minimal in the second case.
    CPM$_{\tt rad}$ fails on $F_1$ but succeeds on $F_2$.
\end{itemize}

The example comes from~\cite[Example 2.3.2]{drusvyatskiy2017many}, and can be written as follows (over matrices $X\in\sym^4$):
\begin{subequations}
\begin{align}
X_{33} &=0 \\
2 X_{13} + X_{22} &=1 \\
X  & \succeq 0
\end{align}
\end{subequations}
Its solution set $P$ is
\begin{equation*}
P=\left\{ 
\begin{pmatrix}
x & y & 0 \\
y & 1 & 0 \\
0 & 0 & 0
\end{pmatrix}
\::\:
\begin{pmatrix}
x & y \\
y & 1
\end{pmatrix}
\succeq 0
\right\}
\hspace{30pt}
\end{equation*}

\subsection{\hybrid\ method} The minimal face $\calF$ containing $P$ is given by
$$
\calF=
\left\{ 
\begin{pmatrix}
x & y & 0 \\
y & z & 0 \\
0 & 0 & 0
\end{pmatrix}
\::\:
\begin{pmatrix}
x & y \\
y & z
\end{pmatrix}
\succeq 0
\right\}
$$
This face is described by matrix
$$
U=
\begin{pmatrix}
1 & 0 \\
0 & 1 \\
0 & 0 
\end{pmatrix}
$$ 
as in~\eqref{eq:face-range},\eqref{eq:Uform}
(with $\iota=\{1,2\}$),
which corresponds to matrix
$Y_U=
\left(\begin{smallmatrix}
 0 \\
 0 
\end{smallmatrix}\right)$.
Suppose that in step 1 we found approximate solution $\tilde X$ with entries $\tilde X_{11}=1$ and $\tilde X_{12}=0$
(and other entries are close to the unique true values). Also suppose that step 2 correctly finds $\iota=\{1,2\}$,
which would give $\tilde Y \approx
\left(
\begin{smallmatrix}
0  \\
0  
\end{smallmatrix}
\right)$. 
If $\tilde X_{ij}\ne X^\ast_{ij}$ for all $i,j$ then the only
way to get a system that contains a 0-dimensional real component with a correct solution
is to fix variables $X_{11}=\tilde X_{11}$, $X_{12}=\tilde X_{12}$. Suppose that
step 4 correctly identifies these variables, then in step 5 we obtain the system
\begin{equation}\label{eq:PatakiFixed}
\begin{array}{rcl}
 X_{33}  &=&0 \\
2 X_{13} + X_{22} &=&1 \\
X_{11} & = &1 \\
X_{12} & = &0
\end{array}
\qquad
\begin{pmatrix}
X_{11}  & X_{12} & X_{13}  \\
X_{12} & X_{22} & X_{23}  \\
X_{13} & X_{23} & X_{33}  
\end{pmatrix}
\begin{pmatrix}
Y_1  \\
Y_2  \\
1
\end{pmatrix} =
0^{3 \times 2}
\end{equation}
Making substitutions $X_{11}=1$, $X_{12}=0$, $X_{33}=0$, $X_{13}=\frac 12 - \frac 12 X_{22}$ and eliminating these variables yields the system
\begin{eqnarray}\label{eq:DruWo}
F:\left\{
	\begin{array}{rcl}
	Y_{1} - \frac{X_{22}}{2} + \frac{1}{2} &= 0 \\
   	X_{23} + X_{22}  Y_{2} &= 0 \\
   	X_{23}  Y_{2} - Y_1 \left(\frac{X_{22}}{2} - \frac{1}{2}\right) &= 0
   	\end{array}
\right.
\end{eqnarray}
Its set of solutions is
$$
V(F)=\left\{(X_{22},X_{33},Y_1,Y_2)=
\left( 2t + 1,
 -2\sqrt{-\tfrac{1}{2t + 1}}t^{2} \pm \sqrt{-\tfrac{1}{2t + 1}}t,
 t,
 \sqrt{-\tfrac{1}{2t + 1}}t
\right)\::\:
t \in \C\setminus\left\{-\tfrac{1}{2}\right\}
\right\}
$$
The solutions are real for $t\in\{0\}\cup(-\infty,-\tfrac 12)$.
Accordingly, the real variety $V(F)(\R)$ contains a zero-dimensional component with
the correct solution $(1,0,0,0)$ (as predicted by~\Cref{th:method})
but also two one-dimensional components corresponding to $t\in(-\infty,-\tfrac 12)$
with ``plus'' and ``minus'' signs.

To compute $\mathrm{Sing} (F)$, we need to evaluate the rank of the Jacobian matrix
%
\[
J(F)=\begin{pmatrix}
    -\frac{1}{2} & 0 & 1 & 0 \\
    Y_{2} & 1 & 0 & X_{22} \\
    -\frac{Y_{1}}{2} & Y_{2} & \frac{1}{2} - \frac{X_{22}}{2} & X_{23}
\end{pmatrix}
\]
at points in $V(F)$.
This rank equals 2 at the point corresponding to $t=0$, and is 3 at all other points.
Hence, we have
\[\mathrm{Sing} (F) = 
\{
\left\{(X_{22}, X_{23}, Y_1, Y_2)=(1,0,0,0)\right\}
\]

Next, we formed the Lagrangian system $\mathscr{L}_u$ as in~\eqref{eq:lagrangianU} for a randomly chosen linear map $\varphi$ and vector $u\in\Z^4$,
and solved it using \msolve. It returned the following rational parametrization encoding its zero-dimensional solution set:
\[
\pi_{{X_{22},X_{23}}}( V(\mathscr{L}_u)) =
\left\{ 
\left( 
 \frac{q_{1}(t)}{q'(t)}, \frac{q_{2}(t)}{q'(t)}
\right) : q(t)=0
\right\}
\]
where $\pi_{{X_{22},X_{23}}}$ is the projection onto variables $X_{22},X_{23}$ and
\begin{align*}
    q_1(t)    &= -129 t^{4}-240 t^{3}-76 t^{2}-16 t-4,\\
    q_2(t)  &= t \left(-135 t^{4}+880 t^{3}+60 t^{2}+16 t+4\right),  \\
    q(t)    &= 27t^5 - 220t^4 - 20t^3 - 8t^2 - 4t. 
\end{align*}
The floating-point representation (up to 5 decimal places) is
\[
\pi_{{X_{22},X_{23}}}(V(\mathscr{L}_u)) \approx
\left\{
\begin{array}{l}
(-10.18376, 17.84481), \\
(-0.08682, -0.16012), \\
(1, 0), \\
(0.04640 - 0.13399i, 0.08358 + 0.16089i), \\
(0.04640 + 0.13399i, 0.08358 - 0.16089i)
\end{array}
\right\}
\]

\subsection{\HNS\ method} Since \HNS\ searches for a minimum rank solution, we need to use $|\iota|=1$. Consequently, three possibilities arise for the kernel matrix:
 \[
\begin{pmatrix}
Y_{11} & Y_{12} \\
1 & 0 \\
0 & 1 \\
\end{pmatrix},
\begin{pmatrix}
1 & 0 \\
Y_{21} & Y_{22} \\
0 & 1 \\
\end{pmatrix},
\begin{pmatrix}
1 & 0 \\
0 & 1 \\
Y_{31} & Y_{32} \\
\end{pmatrix}
 \]
It can be checked that for the third matrix $Y$ system $XY=0$ does not have solutions with $X\in P$.
Below we analyze the two remaining systems. 

\begin{enumerate}
    \item[(1)] Making substitutions $X_{33}=0$, $X_{13}=\frac 12 - \frac 12 X_{22}$ and eliminating these variables yields
\end{enumerate}
 \[
 \begin{pmatrix}
X_{11} & X_{12} & \frac{1}{2} - \frac{X_{22}}{2} \\
X_{12} & X_{22} & X_{23} \\
\frac{1}{2} - \frac{X_{22}}{2} & X_{23} & 0 \\
\end{pmatrix}
\begin{pmatrix}
Y_{11} & Y_{12} \\
1 & 0 \\
0 & 1 \\
\end{pmatrix}
=0^{3\times 2}
\quad\Longleftrightarrow\quad
F_1:\left\{
\begin{array}{rcl}
X_{12} + X_{11}Y_{11} &=& 0 \\
X_{22} + X_{12}Y_{11} &=& 0 \\
X_{11} Y_{12} - \frac{X_{22}}{2} + \frac{1}{2} &=& 0 \\
X_{23} + X_{12}Y_{12} &=& 0 \\
-Y_{12}\left(\frac{X_{22}}{2} - \frac{1}{2}\right) &=& 0 
\end{array}
\right.
\]
Recall that the equation at position (3,1) on the LHS is implied by other
equations (see step 5 of Algorithm~\ref{alg:method}), and
accordingly we removed it on the RHS.
This polynomial system has the following parametric description for its set of solutions:
$$
V(F_1)=\left\{
(X_{11},X_{12},X_{22},X_{23},Y_{11},Y_{12})=
\left(
 \tfrac{1}{t^{2}},
 -\tfrac{1}{t},
 1,
 0,
 t,
 0
 \right)\::\:t \in \mathbb{C}\setminus \left\{0\right\}
\right\}
$$
If we compute the Jacobian and replace the parametric description we can see the matrix has rank 4:
$$
J(F_1)=
\begin{pmatrix}
Y_{11} & 1 & 0 & 0 		& X_{11} & 0 \\
0 & Y_{11} & 1 & 0		& X_{12} & 0 \\
Y_{12} & 0 & -\tfrac 12 & 0 	& 0 & X_{11} \\
0 & Y_{12} & 0 & 1 			& 0 & X_{12} \\
0 & 0 & -\frac{Y_{12}}{2} & 0 	& 0 & \tfrac 12 - \tfrac{X_{22}}2 
\end{pmatrix}=
\begin{pmatrix}
t & 1 & 0 & 0 & \frac{1}{t^2} & 0 \\
0 & t & 1 & 0 & -\frac{1}{t} & 0 \\
0 & 0 & -\frac{1}{2} & 0 & 0 & \frac{1}{t^2} \\
0 & 0 & 0 & 1 & 0 & -\tfrac 1t \\
0 & 0 & 0 & 0 & 0 & 0 
\end{pmatrix}
$$

Moreover, we can see the ideal $I=\langle F_1 \rangle$ is not radical as $(X_{22}-1)^2 \in I$ but $X_{22}-1 \not\in I$. Using \Macaulay\ we can compute its radical ideal and obtain
\[
\sqrt{I}=\langle 
X_{23},
Y_{12},
X_{22} - 1,
X_{12}^2 - X_{11},
X_{12}Y_{11} + 1,
X_{12} + X_{11}Y_{11}
\rangle
\]

but in this case the set of generators is not minimal since 
\[
X_{12} + X_{11}Y_{11}= X_{12}(X_{12}Y_{11} + 1)-Y_{11}(X_{12}^2 - X_{11})
\]

\begin{enumerate}
    \item[(2)] Similarly, for the second system we have 
\end{enumerate}
 \[
 \begin{pmatrix}
X_{11} & X_{12} & \frac{1}{2} - \frac{X_{22}}{2} \\
X_{12} & X_{22} & X_{23} \\
\frac{1}{2} - \frac{X_{22}}{2} & X_{23} & 0 \\
\end{pmatrix}
\begin{pmatrix}
1 & 0 \\
Y_{21} & Y_{22} \\
0 & 1 \\
\end{pmatrix}
=0^{3\times 2}
\quad\Longleftrightarrow\quad
F_2:
\left\{
\begin{array}{rcl}
X_{11} + X_{12}Y_{21} &=& 0\\
X_{12} + X_{22}Y_{21} &=& 0\\
X_{23}Y_{21} - \frac{X_{22}}{2} + \frac{1}{2} &=& 0\\
X_{23} + X_{22}Y_{22} &=& 0\\
X_{23}Y_{22}  &=& 0
\end{array}
\right.
\]

Again, the ideal $I=\langle F_2 \rangle$ is not radical but when we obtain its radical in \Macaulay\ we get a minimal set of generators:
\[
\sqrt{I}=\langle 
X_{23},
Y_{22},
X_{12} + Y_{21},
X_{22} - 1,
X_{11} - Y_{21}^2
\rangle
\]

To summarize, CPM fails for $F_1,F_2$ while CPM$_{\tt rad}$ fails for $F_1$ but succeeds for $F_2$.

\begin{remark}
As Table~\ref{tab:compResults} shows, \HNS\ with both CPM and CPM$_{\tt rad}$ fails for the rotated version of DruWo2017.
The resulting polynomial systems were too big to analyze by hand; we conjecture
that in these cases \Macaulay\ also failed to find {\em minimal} generators of radical ideals,
as for system $F_1$.
\end{remark}


\section{Conclusions and future work}
We presented a hybrid method for certifying weakly feasible SDP problems
that uses an approximate numerical SDP solver and an exact solver for polynomial systems over reals.
Our numerical results indicate that the hybrid method can outperform a pure exact algorithm from~\cite{HNS2015c}
when given a good approximate solution.

In our current experiments scalability was limited both by the numerical SDP solver that we employed
(which was the facial reduction algorithm~\cite{BertiniSDP} based on Bertini~\cite{BertiniBook}),
and by the exact solver for polynomial systems (\msolve).
We conjecture that our approach can handle larger instances if the method in~\cite{BertiniSDP} is replaced with an alternative facial reduction algorithm,
and an exact solver for polynomial systems is replaced with a numerical solver; this is left as a future work.
Potential candidates for the latter could be algorithms from numerical algebraic geometry such as Bertini.
Using such solver could address the issue discussed in the previous sections:
if a system of polynomial equations has a zero-dimensional component and a positive-dimensional component,
a numerical solver would focus on the desired zero-dimensional component if the previous step found
a good approximation.


\appendix
\section{Description of SDP Problems}
\label{appA}

In the following, we present a list of weakly feasible semidefinite programs used for the numerical results shown in Table~\ref{tab:compResults}.

Each SDP is accompanied by a reference from which it was taken, along with a description in the standard primal form as follows:
\begin{equation*}
\label{SDPpp}
\begin{array}{rcll}
& \min_{X \in \sym^\size_+} & \left\{ \langle C, X \rangle_{F} : \affmap(X) = b\right\}
\end{array}
\end{equation*}
with $X=(x_{ij}) \in \sym^n$.

\subsubsection*{DruWo2017-2.3.2PStd~\cite{drusvyatskiy2017many}} 
\[
\min_{X \in \mathbb{S}_+^{3}} \{ x_{22} : 2x_{13} + x_{22} = 1, \; x_{33} = 0 \}
\]

\subsubsection*{Gupta2013-12.3PStd~\cite{Gupta2013}}
\[
\min_{X \in \mathbb{S}_+^{3}} \{ x_{33} : 1 - x_{33} - 2x_{12} = 0, \; x_{22} = 0 \}
\]

\subsubsection*{Hauenstein2.6PStd~\cite{BertiniSDP}}
\[
\min_{X \in \mathbb{S}_+^{3}} \{ x_{11} : x_{11} + 2x_{23} = 2, \; x_{22} = 0 \}
\]

\subsubsection*{Helmberg2000-2.2.1PStd~\cite{Helmberg2002}}
\[
\min_{X \in \mathbb{S}_+^{3}} \{ x_{12} : x_{33} - x_{12} = 1, \; x_{11} = 0, \; x_{13} = 0, \; x_{23} = 0 \}
\]

\subsubsection*{LauVall2020-2.5.1PStd~\cite{Laurent2020}}
\[
\min_{X \in \mathbb{S}_+^{3}} \{ -2x_{12} : x_{11} = 1, \; x_{22} = 0 \}
\]

\subsubsection*{LauVall2020-2.5.2PStd~\cite{Laurent2020}}
\[
\min_{X \in \mathbb{S}_+^{3}} \{ -x_{11} - x_{22} : x_{11} = 0, \; 2x_{13} + x_{22} = 1 \}
\]

\subsubsection*{Pataki2017-4PStd~\cite{Pataki2018}}
\[
\min_{X \in \mathbb{S}_+^{3}} \{ x_{11} + x_{22} : x_{11} = 0, \; 2x_{13} + x_{22} = 1 \}
\]

\subsubsection*{deKlerk2002-2.1PStd~\cite{DeKlerk2002}}
\[
\min_{X \in \mathbb{S}_+^{3}} \{ 2x_{12} + x_{22} : x_{11} = 0, \; x_{22} - 1 = 0 \}
\]

\subsubsection*{DruWo2017-2.3.2DStd~\cite{drusvyatskiy2017many}}
\[
\min_{X \in \mathbb{S}_+^{3}} \{ x_{22} : x_{11} = 0, \; x_{12} = 0, \; x_{22} - x_{13} - 1 = 0, \; x_{23} = 0 \}
\]

\subsubsection*{Gupta2013-12.3DStd~\cite{Gupta2013}}
\[
\min_{X \in \mathbb{S}_+^{3}} \{ x_{33} : x_{11} = 0, \; 2x_{13} = 0, \; 2x_{23} = 0, \; x_{33} - x_{12} - 1 = 0 \}
\]

\subsubsection*{HNS2020-4.1DStd~\cite{HNS2015b}}
\begin{align*}
&\min_{X \in \mathbb{S}_+^{4}} \{ x_{11} + 2x_{22} + 4x_{34} : x_{11} = 1, \; x_{13} = 0, \; x_{14} = 0, \; x_{22} = 2, \\ 
&\quad x_{23} = 0, \; x_{24} = 0, \; x_{33} - 2x_{12} = 0, \; 2x_{34} = 4, \; x_{44} - x_{12} = 0 \}
\end{align*}

\subsubsection*{Hauenstein2.6DStd~\cite{BertiniSDP}}
\[
\min_{X \in \mathbb{S}_+^{3}} \{ x_{11} : 2x_{12} = 0, \; 2x_{13} = 0, \; 2x_{23} - 2x_{11} + 2 = 0, \; x_{33} = 0 \}
\]

\subsubsection*{Helmberg2000-2.2.1DStd~\cite{Helmberg2002}}
\[
\min_{X \in \mathbb{S}_+^{3}} \{ x_{12} : x_{22} = 0, \; 2x_{12} + x_{33} - 1 = 0 \}
\]

\subsubsection*{Pataki2017-4DStd~\cite{Pataki2018}}
\[
\min_{X \in \mathbb{S}_+^{3}} \{ x_{11} + x_{22} : 2x_{12} = 0, \; x_{22} - x_{13} - 1 = 0, \; 2x_{23} = 0, \; x_{33} = 0 \}
\]

\subsubsection*{Permenter2018-4.3.2DStd~\cite{ParriloPerm1}}
\begin{align*}
&\min_{X \in \mathbb{S}_+^{4}} \{ x_{11} - x_{22} - x_{33} + x_{44} : x_{11} = 1, \; x_{13} = 0, \; x_{14} + x_{23} = 0, \\
&\quad x_{24} = 0, \; 2x_{12} + x_{33} + 1 = 0, \; x_{22} + 2x_{34} = -1, \; x_{44} = 1 \}
\end{align*}

\subsubsection*{Permenter2018-4.3.1DStd~\cite{ParriloPerm1}}
\begin{align*}
&\min_{X \in \mathbb{S}_+^{5}} \{ 0 : x_{12} = 0, \; x_{13} = 0, \; x_{14} = 0, \; x_{15} = 0, \; x_{11} + x_{22} = 0, \; x_{24} = 0, \\
&\quad x_{25} = 0, \; x_{34} = 0, \; x_{35} = 0, \; x_{33} - x_{23} + x_{44} = 0, \; x_{45} = 0 \}
\end{align*}

\subsubsection*{PatakiCleanDim4PStd~\cite{Pataki2020}}
\[
\min_{X \in \mathbb{S}_+^{4}} \{ x_{11} + x_{22} + x_{33} : x_{11} = 0, \; 2x_{14} + x_{22} = 0, \; 2x_{24} + x_{33} = 10 \}
\]

\subsubsection*{PatakiCleanDim5PStd~\cite{Pataki2020}}
\[
\min_{X \in \mathbb{S}_+^{5}} \{ x_{11} + x_{22} + x_{33} + x_{44} : x_{11} = 0, \; 2x_{15} + x_{22} = 0, \; 2x_{25} + x_{33} = 0, \; 2x_{35} + x_{44} = 10 \}
\]

\subsubsection*{PatakiCleanDim6PStd~\cite{Pataki2020}}
\begin{align*}
&\min_{X \in \mathbb{S}_+^{6}} \{ x_{11} + x_{22} + x_{33} + x_{44} + x_{55} : \\
&\quad x_{11} = 0, \; 2x_{16} + x_{22} = 0, \; 2x_{26} + x_{33} = 0, \; 2x_{36} + x_{44} = 0, \; 2x_{46} + x_{55} = 10 \}
\end{align*}

\subsubsection*{HeNaSa2016-6.2PStd~\cite{henrion2021exact}}
\begin{align*}
&\min_{X \in \mathbb{S}_+^{6}} \{ x_{11} - 3x_{15} + x_{23} - 4x_{25} + x_{33} + 2x_{44} + x_{46} + x_{56} + x_{66} : \\
&\quad x_{11} = 1, \; x_{12} = 0, \; x_{14} = 0, \; 2x_{13} + x_{22} = 0, \; 2x_{23} = 1, \; 2x_{15} + 2x_{24} = -3, \\
&\quad x_{33} = 1, \; 2x_{25} + 2x_{34} = -4, \; x_{35} = 0, \; 2x_{16} + x_{44} = 2, \; x_{26} + x_{45} = 0, \\
&\quad 2x_{46} = 1, \; 2x_{36} + x_{55} = 0, \; 2x_{56} = 1, \; x_{66} = 1 \}
\end{align*}

\bibliographystyle{siamplain}
\bibliography{biblio}

\end{document}